\documentclass[12pt, reqno]{amsart}
\setcounter{tocdepth}{1}
\usepackage{amsmath}
\usepackage{amssymb}
\usepackage{epsfig}
\usepackage{graphicx}
\usepackage{color}
\usepackage{fullpage}
\definecolor{shadecolor}{gray}{0.875}
\usepackage{amscd}

\numberwithin{equation}{section}

\input xy
\xyoption{all}

\calclayout
\allowdisplaybreaks[3]

\theoremstyle{plain}
\newtheorem{prop}{Proposition}[section]

\newtheorem{theo}[prop]{Theorem}
\newtheorem{coro}[prop]{Corollary}

\newtheorem{lemm}[prop]{Lemma}

\theoremstyle{definition}
\newtheorem{defi}[prop]{Definition}

\theoremstyle{remark}

\def\lra{\longrightarrow}
\def\ra{\rightarrow}

\def\cC{{\mathcal C}}

\def\cH{{\mathcal H}}

\def\cN{{\mathcal N}}
\def\cO{{\mathcal O}}
\def\cR{{\mathcal R}}

\def\cU{{\mathcal U}}

\def\bP{{\mathbb P}}
\def\bQ{{\mathbb Q}}
\def\bR{{\mathbb R}}
\def\bZ{{\mathbb Z}}

\def\dim{\mathrm{dim}}

\def\Eff{\overline{\mathrm{Eff}}}
\def\Pic{\mathrm{Pic}}

\def\Mor{\mathrm{Mor}}
\def\Nef{\mathrm{Nef}}

\def\Pic{\mathrm{Pic}}
\def\Rat{\overline{\mathrm{Rat}}}
\def\wt{\widetilde}
\def\AJ{\mathrm{AJ}}

\makeatother
\makeatletter

\author{Nobuki Shimizu}
\address{Department of science, Graduate School of Science and Technology, Kumamoto University, 2-39-1 Kurokami, Chuo-ku, Kumamoto 860-8555, Japan}
\email{nshimizu5023@gmail.com}

\author{Sho Tanimoto}
\address{Graduate School of Mathematics, Nagoya University, Furocho Chikusa-ku, Nagoya, 464-8602, Japan}
\email{sho.tanimoto@math.nagoya-u.ac.jp}

\subjclass[2020]{Primary : 14H10. Secondary : 14J45.}

\title{The spaces of rational curves \\ on del Pezzo threefolds of degree one}

\begin{document}
\date{\today}

\begin{abstract}
We prove the irreducibility of moduli spaces of rational curves on a general del Pezzo threefold of Picard rank $1$ and degree $1$. As corollaries, we confirm Geometric Manin's conjecture and enumerativity of certain Gromov-Witten invariants for these threefolds.
\end{abstract}

\maketitle

\section{Introduction}

Rational curves on Fano varieties are extensively studied due to their prominent role in classification theory. Indeed the study of low degree rational curves on Fano varieties is a classical subject, and lines, conics, and twisted cubics are well-studied by many algebraic geometers. These date back to at least two centuries ago represented by a result of $27$ lines on a cubic surface. See, e.g., a survey paper \cite{CZ20} and references therein for a history and results on low degree rational curves.

Mori proved in \cite{Mori82} that for any smooth Fano variety $X$ and any point $x \in X$, there exists a rational curve on $X$ passing through $x$ using his famous Bend and Break technique. This idea has been further developed in \cite{Cam92} and \cite{KMM92} proving that any smooth Fano variety is rationally connected, i.e., there exists a family $\pi : \mathcal C \to M$ of rational curves on $X$ such that for any two general points, there is a member of $\pi$ passing through these points. Thus Fano varieties possess a lot of rational curves, and it is natural to study the space of rational curves on a Fano variety. In particular, one may ask what the dimension and the number of irreducible components of the moduli space of rational curves are.

As mentioned above, these questions have been well-studied for low degree rational curves, and one may ask the same questions for higher degree rational curves. One of the pioneering works in this direction is \cite{HRS04} which studied the irreducibility of the moduli space of rational curves of degree $d$ on a low degree general hypersurface using an inductive proof on $d$ based on Bend and Break. This method has been further developed and generalized in \cite{BK13} and \cite{RY16}, and we have a fairly complete understanding of the moduli space of rational curves on a general Fano hypersurface. See also \cite{BP17} for another approach to this problem using the circle method, an important technique from analytic number theory.

In this paper, we consider another class of Fano varieties, i.e., smooth Fano threefolds and the space of rational curves on them. In \cite{LT19}, Lehmann and the second author proposed an approach to understand moduli spaces of rational curves using the perspective of a version of Manin's conjecture which has been developed in a series of papers \cite{FMT}, \cite{BM}, \cite{Peyre}, \cite{BT}, \cite{Peyre03}, and \cite{LST18}, and this approach has been further developed and generalized in \cite{LT21}, \cite{LT19b}, \cite{BLRT}, and \cite{LTSectionII} for smooth projective threefolds. In \cite{BLRT}, Beheshti, Lehmann, Riedl, and the second author established two main results to classify rational curves on smooth Fano threefolds. The first result is the Movable Bend and Break Lemma (Theorem~\ref{mbb}) which claims that a free rational curve of high enough anticanonical degree degenerates to the union of two free curves in the moduli space of stable maps. The second result is a classification of $a$-covers for smooth Fano threefolds and its consequences for moduli spaces of rational curves (Theorem~\ref{2para}). These two results reduce the problem of irreducibility of moduli spaces of rational curves on a given Fano threefold to a finite computation, i.e., checking irreducibility of moduli spaces of low degree rational curves. In this paper, we employ this strategy and apply it to del Pezzo threefolds of Picard rank $1$ and degree $1$, proving irreducibility of the moduli spaces parametrizing rational curves of degree $d$.

A del Pezzo threefold of Picard rank $1$ is a smooth Fano threefold $X$ with $\mathrm{Pic}(X) = \mathbb ZH$ and $-K_X = 2H$. Such threefolds have been classified by Fano and Iskovskih (\cite{Isk77}, \cite{Isk78}, and \cite{Isk79}) and the degree $H^3$ can take any integer value from $1$ to $5$, each corresponding to one family of Fano threefolds. Starr proved the irreducibility of the moduli space of rational curves of degree $d$ in \cite{Starr00} when $H^3 =3$, i.e., $X$ is a smooth cubic threefold in $\mathbb P^4$. In \cite{Cas04}, Castravet settled this issue when $H^3 = 4$, i.e., $X$ is a smooth complete intersection of two quadrics in $\mathbb P^5$. In \cite{LT19}, Lehmann and the second author produced a uniform treatment for del Pezzo threefolds with $H^3 \geq 2$ using the perspective of Manin's conjecture. In this paper we consider the last remaining case, i.e., smooth del Pezzo threefolds of Picard rank $1$ and degree $1$. Here is our main theorem:

\begin{theo}
\label{theo:main}
Let $X$ be a smooth Fano threefold defined over an algebraically closed field $k$ of characteristic $0$ such that $\Pic(X)=\bZ H$, $-K_X=2H$ and $H^3=1$. Assume that $X$ is general in its moduli. Let  $\overline M_{0, 0}(X, d)$ be the moduli space of stable maps of $H$-degree $d$. Assume that $d \geq 2$. Then $\overline M_{0, 0}(X, d)$ consists of two irreducible components:$$\overline M_{0, 0}(X, d)=\cR_d\cup\cN_d$$ such that a general element $(C, f)\in\cR_d$ is a birational stable map from an irreducible curve to the image and any element $(C, f)\in\cN_d$ is a stable map of degree $d$ to a line in $X$.
\end{theo}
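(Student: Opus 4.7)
The plan is to combine the Movable Bend and Break Lemma (Theorem~\ref{mbb}) with the classification of $a$-covers for smooth Fano threefolds (Theorem~\ref{2para}) to reduce the theorem to a finite check of low-degree cases, which can then be settled by direct geometric analysis of the lines, conics, and low-degree rational curves on a general degree-one del Pezzo threefold $X$.

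First, I would make $\cN_d$ honest: for a general $X$ the Hilbert scheme of lines should be a smooth curve, and the closure of the locus of stable maps which factor through a line forms an irreducible component $\cN_d$ whose general member is a genuine degree-$d$ multi-cover of a line. Next, I would set up the induction by fixing a threshold $d_0$ (depending on the numerical input of Theorem~\ref{mbb} applied to $-K_X = 2H$) and proving irreducibility of the ``non-line'' part of $\overline{M}_{0,0}(X,d)$ for all $2 \leq d \leq d_0$ by a case-by-case argument. The base cases are really the heart of the problem: one must analyze conics, cubics, and perhaps a handful of further small-degree spaces, identify the components of each, and verify that the only components are $\cR_d$ and $\cN_d$. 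For this I expect to use the anticanonical map $X \to \bP^6$, the structure of its branch locus (a sextic surface in the weighted projective context), and explicit incidence correspondences to bound the loci of non-birational stable maps.

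For the inductive step, fix $d > d_0$ and an irreducible component $M \subset \overline{M}_{0,0}(X,d)$ other than $\cN_d$. Using Theorem~\ref{2para} I would show that a general $(C,f) \in M$ does not factor through an $a$-cover, so in particular it is a birational stable map from an irreducible curve whose general deformation is free. Movable Bend and Break (Theorem~\ref{mbb}) then degenerates the general point of $M$ to a stable map of the form $C_1 \cup_p C_2$ where each $C_i$ is a free rational curve and the two pieces can be chosen with any splitting $d = d_1 + d_2$ with $d_i \geq 1$. By the induction hypothesis applied to each $d_i$ (with the base cases already established), each smooth point lies in the appropriate $\cR_{d_i}$, since the components of $\cN_{d_i}$ contribute only non-free pieces and these are ruled out. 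Since $\cR_{d_1}$ and $\cR_{d_2}$ are both irreducible and gluing at varying incidence points gives a single irreducible locus of boundary stable maps, any such component $M$ contains the same boundary divisor, and hence must coincide with $\cR_d$ (where $\cR_d$ is defined as the closure of the locus of birational maps from a smooth $\bP^1$).

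The main obstacle will be the base cases, rather than the inductive step, which follows a by-now-standard template. A general degree-one del Pezzo threefold has a significantly richer geometry of lines and low-degree curves than its higher degree cousins: the surface of lines degenerates to a curve and the pencil structures coming from conic bundles are more delicate. I anticipate the hardest technical work will be a careful dimension count and incidence analysis showing that the conic and cubic moduli have no unexpected components coming from curves contained in special surfaces (e.g., hyperplane sections through a line, or in an anticanonical sextic surface in $\bP(1,1,1,1,2,3)$), and verifying freeness of the general member outside $\cN_d$ in these low degrees so that the inductive machinery actually engages.
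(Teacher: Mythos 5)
Your overall framework—induction on $d$ with Theorem~\ref{mbb} and Theorem~\ref{2para} driving the inductive step and low-degree cases as the base—is exactly the paper's strategy, and your inductive step is close to the actual argument (the paper additionally uses Theorem~\ref{2para} to get irreducibility of general evaluation fibers, which is what makes the main component of the gluing locus $\cR_1^{(1)}\times_X\cR_{d-1}^{(1)}$ unique; also note MBB only produces \emph{some} splitting into two free curves, not an arbitrary prescribed splitting $d=d_1+d_2$, so one iterates to a chain of free curves of $H$-degree at most $2$ and then of free lines). But there is a genuine gap: you never supply the base case, and it cannot be handled by the generic dimension counts and incidence correspondences you gesture at. Since $-K_X=2H$, an $H$-conic has anticanonical degree $4$, which is \emph{below} the threshold $\geq 5$ of Theorem~\ref{mbb}; so for $d=2$ you must prove Movable Bend and Break from scratch. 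The paper does this (Theorem~\ref{two lines}) by a specific degeneration inside $|-K_X|$: a one-parameter family of very free conics through a point and meeting a fixed line is forced to meet the locus of reducible anticanonical divisors $H_1+H_2$, and on a general degree-one del Pezzo surface $H_1$ the curve is shown (via the double cover of the quadric cone, the involution $\iota$, and adjunction giving $p_a(C_0)=1$, $C_0\sim-\beta^*K_{\widetilde H_1}$) to degenerate into two free $H$-lines. Even the input that a general conic is very free and maps to a normal quartic spanning a $\bP^4$ (Proposition~\ref{prop:veryfree} and the surrounding lemmas) is nontrivial and uses Tikhomirov's theorem~\ref{var of lines} plus the absence of rational curves on the intermediate Jacobian.

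The second missing ingredient is the irreducibility statement that actually closes the $d=2$ case (Theorem~\ref{d=2}): one must show that the locus in $\cR_1^{(1)}\times_X\cR_1^{(1)}$ parametrizing gluings of two distinct free $H$-lines is irreducible. Your proposal treats this as automatic ("gluing at varying incidence points gives a single irreducible locus"), but for degree one this is precisely where the hard geometry lives: the paper separates the strata where the two lines meet in one, two, or three points, proves irreducibility of the two- and three-point strata using the monodromy (the full Weyl group of $\bold E_8$ on smooth members of $|H|$, bitangents/inflection lines of the branch quartic of a degree-two del Pezzo surface), and then rules out a splitting into two main components by a Hodge index theorem argument on the surface swept out by lines meeting a fixed line. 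Without an argument of this kind (or a substitute), the claim that the non-line part of $\overline M_{0,0}(X,2)$ is a single component—and hence the entire induction—does not get off the ground. Your estimate that cubics and further low degrees also need separate treatment is, by contrast, unnecessary: once $d\geq 3$ the anticanonical degree is $2d\geq 6$ and Theorem~\ref{mbb} applies directly, so $d=2$ (together with $d=1$, known by Theorem~\ref{var of lines}) is the only base case.
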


Our proof is based on induction on $d$. The inductive step is completed by combining \cite{LT19} and \cite{BLRT}, thus our task here is to prove the base cases, i.e., when $d = 1$ and $d = 2$. When $d = 1$, $\overline M_{0, 0}(X, 1)$ is irreducible by work of Tikhomirov (\cite{T}). Hence our efforts will be focused on the irreducibility of the space of $H$-conics mapping birationally to the image. This will be done by establishing Movable Bend and Break for $H$-conics which is outside of the degree range of Movable Bend and Break proved in \cite{BLRT}.

As corollaries we confirm Geometric Manin's conjecture and enumerativity of certain Gromov-Witten invariants for general del Pezzo threefolds of Picard rank $1$ and degree $1$. See Section~\ref{sec:applications} for more details.

\

Here is a road map of this paper: in Section~\ref{sec:prelim}, we recall basic definitions and previous results which are important for our study. In Section~\ref{sec:H-conics}, we prove the irreducibility of the space of $H$-conics on a general del Pezzo threefold of Picard rank $1$ and degree $1$ by establishing Movable Bend and Break for free $H$-conics. In Section~\ref{sec:higherdegree}, we prove our main theorem, Theorem~\ref{theo:main} by combining \cite{LT19} and \cite{BLRT}. In Section~\ref{sec:applications}, we discuss applications of our work to Geometric Manin's conjecture and enumerativity of certain Gromov-Witten invariants for general del Pezzo threefolds of Picard rank $1$ and degree $1$.

\

\noindent
{\bf Notation}: Let $k$ be an algebraically closed field of characteristic $0$. A variety over $k$ is an integral separated scheme of finite type over $k$. A component of a scheme $X$ means an irreducible component of $X$ unless stated otherwise.

Let $X$ be a projective variety of Picard rank $1$ defined over $k$ with an ample generator $H$ for $\mathrm{Pic}(X)$. Then $\overline{M}_{0,n}(X, d)$ is the coarse moduli space of stable maps of $H$-degree $d$ with $n$ marked points.

For a smooth projective variety $X$, let $N^{1}(X)_{\bZ}$ denote the quotient of the group of the Cartier divisors by numerical equivalence and let $N_{1}(X)_{\bZ}$ denote the quotient of the group of integral $1$-cycles by numerical equivalence. We set $N^{1}(X)=N^{1}(X)_{\bZ}\otimes_{\bZ}\bR$ and $N_{1}(X)=N_{1}(X)_{\bZ}\otimes_{\bZ}\bR$ which are finite dimensional real vector spaces. Let $\Eff^{1}(X)$ and $\Eff_{1}(X)$ denote the pseudo-effective cones of divisors and curves, and the intersections of these cones with $N^{1}(X)_{\bZ}$ and $N_1(X)_{\bZ}$ are denoted by $\Eff^{1}(X)_{\bZ}$ and $\Eff_{1}(X)_{\bZ}$ respectively. We also denote the nef cone of divisors and nef cone of curves by $\Nef^{1}(X)$ and $\Nef_{1}(X)$. Moreover the intersections of these cones with $N^{1}(X)_{\bZ}$ and $N_1(X)_{\bZ}$ are denoted by $\Nef^{1}(X)_{\bZ}$ and $\Nef_{1}(X)_{\bZ}$ respectively.

\bigskip

\noindent
{\bf Acknowledgements:}
This work is based on the first author's Master thesis \cite{Shi21} at Kumamoto University. The authors would like to thank Brian Lehmann for helpful discussions and comments on an early draft of this paper. The authors would also like to thank Lars Halvard Halle for answering our question regarding Kulikov models. The authors would like to thank the anonymous referees for careful reading of the paper and helpful comments which significantly improved the exposition of the paper.

The second author was partially supported by Inamori Foundation, by JSPS KAKENHI Early-Career Scientists Grant number 19K14512, by JSPS Bilateral Joint Research Projects Grant number JPJSBP120219935, and by MEXT Japan, Leading Initiative for Excellent Young Researchers (LEADER).

\section{Preliminaries}
\label{sec:prelim}

In this paper we work over an algebraically closed field $k$ of characteristic 0.
We recall some definitions and previous results in this section.

First we introduce certain birational invariants which play crucial roles in our study:

\begin{defi}[{\cite[Definition 2.2]{HTT15}}]
Let $X$ be a smooth projective variety and let $L$ be a big and nef $\bQ$-divisor on $X$. The {\it Fujita invariant} is $$a(X, L):=\min\{t\in\bR\mid t[L]+[K_{X}]\in\Eff^{1}(X)\}.$$ If $L$ is not big, we set $a(X, L)=\infty$.
\end{defi}
When $X$ is a singular projective variety, we define the $a$-invariant by pulling  back to a smooth resolution $\beta:\widetilde{X}\ra X$: $$a(X, L):=a(\widetilde{X}, \beta^{*}L).$$ This is well-defined by \cite[Proposition 2.7]{HTT15}. When $L$ is big, it follows from \cite{BDPP13} that $a(X, L)$ is positive if and only if $X$ is uniruled, i.e., there exist a subvariety $Y$ and a generically finite dominant rational map $\bP^{1}\times Y\dashrightarrow X$.


\begin{defi}[{\cite[Definition 2.8]{HTT15}}]
Let $X$ be a uniruled smooth projective variety and $L$ be a big and nef $\bQ$-divisor on $X$. We define $b(X, L)$ to be the codimension of the minimal supported face of $\Eff^{1}(X)$ containing the class $a(X, L)[L]+[K_{X}]$.
\end{defi}

When $X$ is singular, we define the $b$-invariant by pulling back to a smooth resolution $\beta:\widetilde{X}\ra X$: $$b(X, L)=b(\widetilde{X}, \beta^{*}L).$$ It is well-defined because of birational invariance \cite[Proposition 2.10]{HTT15}.


These invariants play central roles in Manin's conjecture as these invariants appear as the exponents of the asymptotic formula for the counting function of rational points on a smooth projective rationally connected variety after removing the contribution from an exceptional set. The following theorem can be used to describe the exceptional set for Manin's conjecture:

\begin{theo}[{\cite[Theorem 1.1]{HJ17} and \cite[Theorem 3.3]{LT19}}]
\label{proper closed}
Let $X$ be a smooth uniruled projective variety and $L$ be a big and nef $\bQ$-divisor on $X$. Let $V$ be the union of all subvarieties $Y$ such that $a(Y, L|_{Y})>a(X, L)$. Then $V$ is a proper closed subset of $X$ and its components are precisely the maximal elements in the set of subvarieties with higher $a$-invariant.
\end{theo}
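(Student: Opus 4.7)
The plan is to combine a boundedness statement for candidate subvarieties with a constructibility statement for the Fujita invariant in families. Write $\cS = \{Y \subsetneq X : a(Y, L|_{Y}) > a(X, L)\}$, restricting to subvarieties for which $L|_Y$ is big and nef; the goal is to show $V = \bigcup_{Y \in \cS} Y$ is closed and proper, with irreducible components the maximal elements of $\cS$ under inclusion.

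The main step is to show that $\cS$ is covered by finitely many algebraic families. For $Y \in \cS$ of dimension $k$, pass to a resolution $\beta : \widetilde Y \to Y$. By definition $a(Y, L|_Y) \beta^{*}L + K_{\widetilde Y} \in \Eff^{1}(\widetilde Y)$, while $a(X, L)\beta^{*}L + K_{\widetilde Y}$ is \emph{not} pseudo-effective. Intersecting these classes against suitable products of $\beta^{*}L$ gives numerical inequalities that, combined with BAB-type boundedness (Birkar) for the birational models $\widetilde Y$, yield a uniform upper bound on $(L|_{Y})^{k}$ depending only on $(X, L)$. Matsusaka's big theorem and standard Hilbert/Chow scheme theory then produce finitely many parametrizing families $\pi_i : \cY_i \to T_i$ equipped with evaluation maps $s_i : \cY_i \to X$ whose fibers exhaust $\cS$.

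For each such family I would then show that $t \mapsto a(Y_t, L|_{Y_t})$ is constructible and upper semi-continuous after resolving singularities fiberwise over a stratification of $T_i$. Once fibers are smooth, the Fujita invariant is governed by the position of $K_{Y_t} + c \, L|_{Y_t}$ relative to $\Eff^{1}(Y_t)$; semi-continuity of cohomology together with constructibility of the Neron--Severi rank and of the pseudo-effective cone in flat families implies that the sub-locus $T_i^{\circ} \subset T_i$ where $a(Y_t, L|_{Y_t}) > a(X, L)$ is constructible, with closure continuing to satisfy the inequality at general points (after, if necessary, passing to an irreducible component of a limit fiber).

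Assembling the pieces, $V = \bigcup_i s_i(\pi_i^{-1}(\overline{T_i^{\circ}}))$ is a finite union of closed subsets of $X$, hence closed; it is proper since $a(X, L) \not> a(X, L)$ rules out $Y = X$. To identify components, observe that if $W$ is an irreducible component of $V$ then $W \in \cS$ by upper semi-continuity applied to one of the parametrizing families, and $W$ must be maximal among elements of $\cS$ or else a strictly larger $Y \in \cS$ would produce a strictly larger closed subset of $V$ containing $W$ as a proper subset, contradicting maximality of $W$ as a component; conversely, any maximal $Y \in \cS$ is visibly an irreducible component. The main obstacle is the boundedness step: a priori $\cS$ could consist of infinitely many families of unbounded $L$-degree, and ruling this out depends on deep birational-geometric inputs such as Birkar's proof of the BAB conjecture, since a variety with large Fujita invariant relative to $L$ behaves like a Fano-type variety after a suitable birational modification.
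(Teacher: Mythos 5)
This statement is quoted in the paper from \cite[Theorem 1.1]{HJ17} and \cite[Theorem 3.3]{LT19}; the paper gives no proof of its own, so your attempt has to be measured against those references. Your first half does track their strategy: one restricts to adjoint rigid subvarieties (those with $\kappa(a\beta^*L+K_{\widetilde Y})=0$, which cover every member of $\cS$ via the Iitaka fibration of the adjoint class), applies weak BAB/Birkar to bound $(\beta^*L)^{\dim Y}$, and obtains finitely many families $\cY_i\to T_i$ with evaluation maps $s_i:\cY_i\to X$. (Two small omissions there: you should not discard the subvarieties with $L|_Y$ not big --- by the paper's convention they have $a=\infty$ and lie in $V$; they are handled because they sit inside the augmented base locus of $L$, a proper closed subset. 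And the reduction to adjoint rigid members is what makes BAB applicable, so it should be explicit.)

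The second half has two genuine gaps. First, properness: you dispose of it by noting $Y=X\notin\cS$, but that only excludes the single subvariety $X$; the actual danger is that some family $\cY_i\to T_i$ of \emph{strictly smaller} members of $\cS$ dominates $X$, in which case $V=X$. Ruling this out is the heart of the theorem, and it is done by cutting $T_i$ down so that $s_i$ is generically finite and dominant and then using the ramification formula $K_{\cY_i}=s_i^*K_X+R$ with $R\geq 0$ together with adjunction to the fibers, which yields $a(Y_t,L|_{Y_t})\leq a(\cY_i,s_i^*L)\leq a(X,L)$, a contradiction. Second, closedness: your step ``the closure of $T_i^\circ$ continues to satisfy the inequality at general points'' invokes a semicontinuity of the Fujita invariant in the direction that the standard tools do not give. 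Upper semicontinuity of $h^0$ shows that if $a_0L+K_{Y_t}$ is pseudo-effective on the generic fiber of a smooth family it stays pseudo-effective on special fibers; the contrapositive controls specializations of \emph{low} $a$-invariant, not of high one, so a limit of members of $\cS$ may a priori fall out of $\cS$ (and when limit fibers are reducible or non-reduced the behavior is worse still). The references avoid this entirely: they prove instead that for each family the closed set $V_i=\overline{s_i(\cY_i)}$ satisfies $a(V_i,L|_{V_i})\geq a(Y_t,L|_{Y_t})>a(X,L)$ --- the same ramification/adjunction lemma applied to $s_i:\cY_i\to V_i$ --- whence $V=\bigcup_i V_i$ is closed and its components, being among the $V_i$, are themselves members of $\cS$ and hence exactly the maximal ones. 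Without that lemma both your closedness argument and your identification of the components ("by upper semi-continuity") are unsupported.
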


The closed set $V$ is explicitly understood for smooth Fano threefolds in \cite{BLRT}:

\begin{theo}[{\cite[Theorem 4.1]{BLRT}}]
\label{theo:higher_a-inv}
Let $X$ be a smooth Fano threefold of Picard rank 1 and $Y$ be a subvariety of dimension 2 of $X$ such that $a(Y, -K_{X})>a(X, -K_{X})$. Then $Y$ is swept out by $-K_{X}$-lines.
\end{theo}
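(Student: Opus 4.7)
The plan is to translate the hypothesis $a(Y,-K_X|_Y)>1=a(X,-K_X)$ into a structural negativity statement on a resolution of $Y$, then extract a covering family of rational curves using BDPP, and finally pin down their $-K_X$-degree via the minimal model program on the surface. Let $\beta:\tilde Y\to Y$ be a resolution of singularities and set $L=\beta^{*}(-K_X|_Y)$, which is big and nef. Writing the class of $Y$ as $cH\in\Pic(X)=\bZ H$ and applying adjunction on $Y$, one checks that $L+K_Y=Y|_Y=cH|_Y$. Combining this with the discrepancy relation $K_{\tilde Y}=\beta^{*}K_Y+\sum_i a_i E_i$ yields
\[
L+K_{\tilde Y}=\beta^{*}(cH|_Y)+\sum_i a_i E_i.
\]
Since $\beta^{*}(cH|_Y)$ is nef and big, the non-pseudo-effectivity of $L+K_{\tilde Y}$ forced by $a(Y,L)>1$ can only come from sufficiently negative discrepancies $a_i<0$, so the negativity is concentrated on the $\beta$-exceptional locus and $Y$ must be worse than canonical. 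In particular $K_{\tilde Y}$ itself is not pseudo-effective, and BDPP gives that $\tilde Y$ (and hence $Y$) is uniruled.

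Next I would run the $(K_{\tilde Y}+L)$-MMP on $\tilde Y$. Because $L+K_{\tilde Y}$ is not pseudo-effective, the program ends at a Mori fiber space $\tilde Y'$: either $\tilde Y'\cong\bP^2$, or $\tilde Y'\to B$ is a $\bP^1$-bundle over a smooth curve. In either case there is a covering family $\{F\}$ of rational curves on $\tilde Y'$ whose class spans the Mori-fiber ray, and the corresponding strict transforms $\tilde F$ on $\tilde Y$ sweep out $\tilde Y$ and satisfy $(K_{\tilde Y}+L)\cdot\tilde F<0$. A general member $\tilde F$ is not contained in the exceptional locus of $\beta$, so $L\cdot\tilde F\geq 1$; combining this with the inequality $(L+K_{\tilde Y})\cdot\tilde F<0$, with the bound $-K_{\tilde Y}\cdot\tilde F\leq 2+(\text{multiplicities of $\tilde F$ along negative-discrepancy $E_i$})$, and with the fact that $-K_X\cdot\beta_{*}\tilde F=L\cdot\tilde F$ is a positive multiple of the Fano index of $X$, should force $\beta_{*}\tilde F$ to have minimal possible $-K_X$-degree, i.e.\ to be a $-K_X$-line. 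Since the $\tilde F$ sweep $\tilde Y$, their images sweep $Y$, which is the desired conclusion.

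The main obstacle, as I see it, is the last numerical step: obtaining the \emph{sharp} bound that $\beta_{*}\tilde F$ is a line rather than some higher multiple. A pure Mori-cone bound on the minimal model $\tilde Y'$ only yields $L\cdot F<2$, which is delicate when the Fano index of $X$ is $\geq 2$; one must instead exploit the fact that the negativity of $L+K_{\tilde Y}$ is supported on the exceptional divisors with $a_i<0$, i.e.\ that $\tilde F$ is forced to meet these divisors and therefore $\beta_{*}\tilde F$ passes through non-canonical singular points of $Y$, which in turn is what cuts the $H$-degree down to one. An alternative, probably closer to the actual argument of \cite{BLRT}, is to combine this uniruledness and MMP input with the Iskovskikh classification of smooth Fano threefolds of Picard rank one, checking family by family that the only surfaces of higher $a$-invariant are the scrolls of $-K_X$-lines.
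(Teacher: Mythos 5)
Two remarks before the substance: the paper you were given does not prove this statement at all --- it is quoted verbatim from [BLRT, Theorem~4.1] --- so the only available comparison is with the cited proof. That proof runs through the adjunction-theoretic classification of quasi-polarized surfaces with $a$-invariant greater than $1$: on a resolution, $(\widetilde{Y},L)$ with $L=\beta^{*}(-K_X|_Y)$ big and nef and $K_{\widetilde{Y}}+L$ not pseudo-effective is birationally either a $\mathbb{P}^1$-fibration with fibers of $L$-degree $1$, or one of the bounded cases $(\mathbb{P}^2,\mathcal{O}(1))$, $(\mathbb{P}^2,\mathcal{O}(2))$, a quadric with $\mathcal{O}(1)$; one then uses geometry specific to Fano threefolds of Picard rank $1$ (including control of non-normal surfaces) to handle the exceptional cases. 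Your opening moves (resolution, $K_{\widetilde{Y}}+L$ not pseudo-effective, uniruledness via BDPP, MMP ending in a Mori fiber space) reproduce the easy first half of this and are fine in spirit.

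The genuine gap is exactly where you flag it, and the mechanism you propose cannot close it. The numerical output of the Mori-fiber-space step is only $L\cdot F\le 1$ in the scroll and quadric cases and $L\cdot F\le 2$ in the $\mathbb{P}^2$ case, and the case $(\widetilde{Y},L)$ birational to $(\mathbb{P}^2,\mathcal{O}(2))$ is genuinely swept out by anticanonical conics, not lines; no local argument about the sweeping curves being forced through negative-discrepancy exceptional divisors can lower their $L$-degree from $2$ to $1$. The actual content of the theorem is that this Veronese-type case (and the non-normal surfaces realizing it) cannot sit inside a smooth Fano threefold of Picard rank $1$; indeed, when the index $r$ is at least $2$, divisibility ($-K_X\cdot C\in r\mathbb{Z}$) already kills the scroll, quadric and $(\mathbb{P}^2,\mathcal{O}(1))$ cases, so there the whole theorem amounts to excluding precisely the $(\mathbb{P}^2,\mathcal{O}(2))$ case --- e.g.\ a would-be member of $|H|$ on a degree-one del Pezzo threefold whose normalization is $(\mathbb{P}^2,\mathcal{O}(1))$ --- and your sketch contains no tool for that; note also that ``minimal possible $-K_X$-degree'' is $r\ge 2$ in that situation, so your concluding inference ``minimal degree, i.e.\ a $-K_X$-line'' is not even correct as stated. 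Secondary problems: the formula $K_{\widetilde{Y}}=\beta^{*}K_Y+\sum_i a_iE_i$ together with adjunction $K_Y=(K_X+Y)|_Y$ presupposes $Y$ normal (Gorenstein), whereas higher-$a$ surfaces may well be non-normal, so one must pass through the normalization and keep track of the conductor; and the bookkeeping of a ``$(K+L)$-MMP'' with $L$ merely big and nef, and the transfer of negativity back to $\widetilde{Y}$, require the care that the quoted Fujita/H\"oring-type adjunction results already package. So: right skeleton, but the decisive step of the theorem --- eliminating the conic-swept case using Picard rank $1$ --- is missing, as you yourself acknowledge.
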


Let $\Mor(\bP^{1}, X)$ be the quasi-projective scheme parametrizing maps $\bP^{1}\ra X$. This is constructed in \cite{Gro95}.

\begin{defi}
Let $X$ be a smooth projective variety and let $\alpha\in\mathrm{Eff}_{1}(X)_{\bZ}$. The set of components of $\Mor(\bP^{1}, X)$ parametrizing curves of class $\alpha$ is denoted by $\Mor(\bP^{1}, X, \alpha)$. For an open subset $U\subset X$, $\Mor_{U}(\bP^{1}, X, \alpha)$ denotes the sublocus of $\Mor(\bP^{1}, X, \alpha)$ parametrizing curves which meet $U$.
\end{defi}


Here is an important connection between Manin's conjecture and properties of moduli spaces of rational curves:
\begin{theo}[{\cite[Theorem 4.6]{LT19}}]
Let $X$ be a smooth weak Fano variety, i.e., $X$ is projective and $-K_X$ is big and nef. Let $V$ be the union of the subvarieties $Y$ of $X$ with $a(Y, -K_{X}|_{Y})>a(X, -K_{X})$. This is proper closed by Theorem \ref{proper closed}. Let $U$ be the complement of $V$. Then any component $M$ of $\Mor_{U}(\bP^{1}, X, \alpha)$ is a dominant component, i.e, for the universal family $\pi : \mathcal C \to M$, the evaluation map $s : \mathcal C \to X$ is dominant. Hence it satisfies $$\dim\,M=-K_{X}\cdot\alpha+\dim\,X$$ for any $\alpha\in\Nef_{1}(X)_{\bZ}$.
\end{theo}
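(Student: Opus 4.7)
The plan is to combine a lower bound on moduli dimensions from deformation theory with an upper bound that becomes available when curves are confined to a proper subvariety, coming from the $a$-invariant. We may assume $\alpha \neq 0$ (otherwise the maps are constant and the statement is trivial). Standard deformation theory of maps to a smooth variety gives $\dim M \geq -K_X\cdot\alpha + \dim X$ for any component $M \subset \Mor(\bP^1, X, \alpha)$. Since $-K_X$ is big and nef we also have $a(X,-K_X)=1$, so $V$ is the union of subvarieties with $a(Y,-K_X|_Y)>1$.

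Assume for contradiction that a component $M \subset \Mor_U(\bP^1,X,\alpha)$ is not dominant, and let $Y \subsetneq X$ be the closure of the image of the universal evaluation over $M$. Because $M$ meets $U$, we have $Y \not\subset V$, hence $a(Y,-K_X|_Y) \leq 1$; in particular $-K_X|_Y$ is big. Choose a resolution $\mu:\wt Y\to Y$ and set $L=\mu^*(-K_X|_Y)$, so $a(\wt Y,L)=a(Y,-K_X|_Y)\leq 1$ and $a(\wt Y,L)L + K_{\wt Y}$ is pseudo-effective by definition of the $a$-invariant. A general $[f]\in M$ factors through $Y$ and lifts uniquely to $\wt Y$, producing a component $\wt M$ of $\Mor(\bP^1,\wt Y,\wt\alpha)$ birational to $M$, with $\mu_*\wt\alpha$ agreeing with $\alpha$ as classes supported on $Y$.

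The component $\wt M$ dominates $\wt Y$, so in characteristic zero a general member of $\wt M$ is free and $\dim \wt M = -K_{\wt Y}\cdot\wt\alpha+\dim\wt Y$. The key estimate is that $\wt\alpha$, being the class of a movable family of curves covering $\wt Y$, is dual to pseudo-effective divisors by the theorem of Boucksom--Demailly--P\u{a}un--Peternell, so $(a(\wt Y,L)L+K_{\wt Y})\cdot\wt\alpha \geq 0$, and hence by the projection formula
\[
-K_{\wt Y}\cdot\wt\alpha \leq a(\wt Y,L)\,(L\cdot\wt\alpha) = a(Y,-K_X|_Y)(-K_X\cdot\alpha).
\]
Combining with $\dim M = \dim\wt M \geq -K_X\cdot\alpha+\dim X$ yields
\[
(a(Y,-K_X|_Y)-1)(-K_X\cdot\alpha) \geq \dim X - \dim Y \geq 1.
\]
But $-K_X\cdot\alpha>0$ (since $-K_X$ is big and nef and $\alpha\neq 0$) while $a(Y,-K_X|_Y)\leq 1$, so the left-hand side is non-positive, a contradiction; thus $M$ must be dominant.

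Finally, for a dominant component $M$ on a smooth variety in characteristic zero, a general $[f]\in M$ is free, so $H^1(\bP^1,f^*T_X)=0$ and the expected-dimension formula $\dim M = -K_X\cdot\alpha+\dim X$ holds. The main subtlety in the argument is justifying the upper bound on $\dim \wt M$: one needs to know that the lifted class $\wt\alpha$ pairs non-negatively with the pseudo-effective divisor $a(\wt Y,L)L+K_{\wt Y}$, which requires the BDPP characterization of the dual of the pseudo-effective cone; without this input the chain of inequalities collapses.
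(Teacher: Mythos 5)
The paper states this result only as a citation of \cite[Theorem 4.6]{LT19} and gives no proof of its own; your argument is correct and follows essentially the standard route of that reference: bound $\dim M$ from below by deformation theory, and from above by lifting a non-dominant family to a resolution $\wt Y$ of the swept-out subvariety $Y\not\subset V$ and pairing the movable lifted class against the pseudo-effective divisor $a(\wt Y,L)L+K_{\wt Y}$. The only cosmetic slip is the parenthetical claim that $-K_X\cdot\alpha>0$ merely because $-K_X$ is big and nef (a big and nef divisor may have degree zero on a curve class), but your contradiction only needs $-K_X\cdot\alpha\geq 0$ together with $a(Y,-K_X|_Y)\leq 1$, so nothing is lost.
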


We also consider the moduli space of stable maps. Let $X$ be a smooth projective variety and $\beta$ be an element in $N_{1}(X)_{\bZ}$. Let $\overline{M}_{0, n}(X, \beta)$ be the coarse moduli space of stable maps of genus 0 and class $\beta$ with $n$ marked points. (See \cite{BM96} for the definitions and basic properties of this moduli space.) For a smooth Fano threefold we let $\Rat(X)$ denote the union of the components of $\overline M_{0, 0}(X)$ that generically parametrize stable maps with irreducible domains.


One of the main theorems in \cite{BLRT} is Movable Bend and Break Lemma which is a key to a solution of Batyrev's conjecture for smooth Fano threefolds in \cite{BLRT}. Here we state Movable Bend and Break lemma for smooth Fano threefolds of Picard rank $1$:

\begin{theo}[{\cite[Theorem 6.7]{BLRT}} Movable Bend-and-Break Lemma] \label{mbb}
Let $X$ be a smooth Fano threefold of Picard rank $1$. Let $M$ be a component of $\Rat(X)$ that generically parametrizes free curves. Suppose that a general curve $C$ parametrized by $M$ has anticanonical degree $\geq 5$. Then $M$ contains a stable map of the form $f:Z\ra X$ where $Z$ has two components and the restriction of $f$ to each component realizes this component as a free curve on $X$.
\end{theo}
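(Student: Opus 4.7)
The plan is to combine a dimension count with Mori's Bend and Break and then use Theorem~\ref{theo:higher_a-inv} to control the irreducible components of the degenerate stable map. Let $M\subset\Rat(X)$ be the given component generically parametrizing free rational curves of class $\beta$ with anticanonical degree $e=-K_X\cdot\beta\geq 5$. Since $X$ is a Fano threefold, the expected dimension formula gives $\dim M = e$, and the corresponding component $M'$ of $\overline M_{0,2}(X,\beta)$ carrying two marked points has dimension $e+2$.

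First I would carry out the standard Bend and Break. The evaluation morphism $\mathrm{ev}:M'\to X\times X$ is dominant by the freeness of a general curve parametrized by $M$, so a general fiber has dimension $e+2-6=e-4\geq 1$. Picking two general points $p,q\in X$ lying outside the proper closed locus $V$ of Theorem~\ref{proper closed}, I take a complete curve $B\subset\mathrm{ev}^{-1}(p,q)$. Since the stabilizer in $\mathrm{Aut}(\bP^{1})$ of two distinct points is the affine group $\mathbb{G}_m$, the family $B$ cannot parametrize only stable maps with smooth irreducible domain; hence some stable map $g:Z\to X$ parametrized by $B$ has reducible domain $Z=Z_1\cup\cdots\cup Z_r$ with $r\geq 2$.

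The upgrade from Bend and Break to \emph{Movable} Bend and Break is to arrange that $g$ can be chosen so that $Z$ has exactly two components, each of which is a free curve on $X$. Here I would invoke Theorem~\ref{theo:higher_a-inv}: any irreducible surface $Y\subset X$ with $a(Y,-K_X)>a(X,-K_X)$ is swept out by $-K_X$-lines, so the locus $V$ is fully explicit and is a union of such surfaces. The genericity of $p,q\notin V$ together with a deformation-theoretic analysis of $g$ prevents any non-contracted component $Z_i$ from being mapped entirely into $V$, which in turn forces each $g|_{Z_i}$ to be free. Finally, a smoothing argument allows one to consolidate $Z$ down to exactly two free components whose union still lies in the closure of $M$.

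The main obstacle I expect is precisely this freeness of each component of the degenerate map. Bare Bend and Break only produces \emph{some} reducible stable map, whose components could in principle be non-free multiple covers or be contained in the bad locus $V$, in which case the dimension-count machinery fails to propagate. Overcoming this requires the classification of surfaces of higher Fujita invariant (Theorem~\ref{theo:higher_a-inv}) together with a careful deformation-theoretic choice of the curve $B$ inside the fiber of $\mathrm{ev}$; the hypothesis $e\geq 5$ provides exactly the one extra dimension of flexibility ($e-4\geq 1$) needed to move within this fiber and force the limit to decompose into two \emph{free} components.
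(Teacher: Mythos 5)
This statement is not proved in the paper at all: it is quoted verbatim from \cite[Theorem 6.7]{BLRT}, so there is no internal argument to compare with, and any proof attempt has to be measured against the (long) argument in that reference. Your sketch correctly reproduces the classical Bend-and-Break setup: the fiber of $\mathrm{ev}:M'\to X\times X$ over two general points is complete of dimension $e-4\geq 1$, so the family must hit the boundary and some limit stable map has reducible domain. But the step you yourself flag as the crux --- forcing each component of the limit to be \emph{free} and reducing to exactly two components --- is where the proposal has a genuine gap. Avoiding the locus $V$ of Theorem~\ref{proper closed} does not imply freeness: Theorem~\ref{theo:higher_a-inv} only describes surfaces $Y$ with $a(Y,-K_X)>a(X,-K_X)$, whereas non-free curves (and families of them dominating $X$) need not be contained in any such surface. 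Moreover, only the total limit curve is constrained to pass through $p$ and $q$; an individual component $Z_i$ need not meet either point, so genericity of $p,q$ gives you no control over where $Z_i$ sits, and it may well be a non-free curve or a multiple cover of a line or conic. Finally, the ``smoothing argument to consolidate $Z$ down to exactly two free components'' presupposes that the pieces being glued and smoothed are already free, which is exactly what is in question, so as written the argument is circular at that point.

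For comparison, the actual proof in \cite{BLRT} is a substantial induction that uses much more than the dimension count: it relies on the classification of $a$-covers and of families with non-irreducible evaluation fibers (the source of Theorem~\ref{2para}), a separate analysis of low anticanonical degree curves (lines and conics), and a careful case analysis of how a degenerating free curve can break, precisely to rule out the bad limits (non-free components, multiple covers, components inside surfaces swept by lines) that your sketch dismisses in one sentence. The hypothesis on the anticanonical degree is not just ``one extra dimension in the fiber of $\mathrm{ev}$''; it enters repeatedly in those case analyses. So the proposal identifies the right difficulty but does not overcome it, and in its present form it is not a proof of the theorem.
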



The next main theorem from \cite{BLRT} follows from a classification of $a$-covers for smooth Fano threefolds. Such a classification was first obtained in \cite{LT17} for Fano threefolds of Picard rank $1$:
\begin{theo}[{\cite[Theorem 1.3]{BLRT}}]\label{2para}
Let $X$ be a smooth Fano threefold. Let $M$ be a component of $\Rat(X)$ and let $\cC\ra M$ be the corresponding component of $\overline M_{0,1}(X)$. Suppose that the evaluation map $\mathrm{ev} : \cC\ra X$ is dominant and its general fibers are not irreducible. Then either:
\begin{itemize}
\item$M$ parametrizes a family of stable maps whose images are $-K_{X}$-conics, or
\item$M$ parametrizes a family of curves contracted by a del Pezzo fibration $\pi:X\ra Z$.
\end{itemize}
\end{theo}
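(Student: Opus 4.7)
The plan is to extract a finite cover $\nu: \widetilde{X} \to X$ from the family $M$ via Stein factorization, verify that it preserves the $a$-invariant with respect to $-K_X$ (i.e.\ is an \emph{$a$-cover}), and then invoke the classification of $a$-covers on smooth Fano threefolds. All three ingredients fit together into a ``it's a cover, the cover is rigid, so $M$ is rigid'' argument.

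First, I form the Stein factorization of the evaluation map $\mathrm{ev}:\cC\to X$:
\[
\mathrm{ev}\colon \cC \xrightarrow{\phi} \widetilde{X} \xrightarrow{\nu} X,
\]
where $\phi$ has geometrically connected fibers and $\nu$ is finite. Since $\mathrm{ev}$ is dominant with reducible general fibers, $\nu$ has degree $d\geq 2$, and the fiber of $\phi$ over a point $\tilde x\in\widetilde X$ parametrizes one of the irreducible ``branches'' of curves in $M$ passing through $\nu(\tilde x)\in X$. Thus $\widetilde X$ geometrically records the components of a general fiber of $\mathrm{ev}$.

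Second, I would check that $\nu$ is an $a$-cover, i.e.\ $a(\widetilde{X}, \nu^*(-K_X)) = a(X, -K_X) = 1$. Passing to a smooth resolution $\beta:\widetilde{X}'\to\widetilde{X}$ and applying Riemann--Hurwitz to $K_{\widetilde{X}'} - \beta^*\nu^*K_X$ gives $a(\widetilde X',\beta^*\nu^*(-K_X))\leq 1$ in general. For equality, one uses that $\cC$ (equipped with the section coming from the marked point) pulled back over $\widetilde{X}'$ produces a dominant component of rational curves on $\widetilde{X}'$ of the same anticanonical degree (up to pullback), together with the dimension identity $\dim M = -K_X\cdot\alpha+3$ from the theorem quoted just before the statement. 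The existence of such a dominant component on $\widetilde{X}'$ of the correct expected dimension forces $a(\widetilde X',\beta^*\nu^*(-K_X))$ to be exactly $1$.

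Third, I invoke the classification of $a$-covers of smooth Fano threefolds established in \cite{LT17} for Picard rank $1$ and extended in \cite{BLRT}. It asserts that any nontrivial $a$-cover $\nu:\widetilde X\to X$ is either (a) the natural double cover associated to a family of $-K_X$-conics, or (b) a cover factoring through a del Pezzo fibration $\pi:X\to Z$. In case (a), the connected fibers of $\phi$ are forced to be $-K_X$-conics, placing $M$ in the first alternative of the statement; in case (b), the connected fibers of $\phi$ lie inside fibers of $\pi$, so curves in $M$ are contracted by $\pi$, placing $M$ in the second alternative.

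The hardest part is the classification invoked in Step 3, which rests on the Iskovskih--Mori--Mukai classification of smooth Fano threefolds together with a careful Riemann--Hurwitz analysis to exclude all but these two structured sources of nontrivial covers. Step 2 is technical but formal once one recalls that the $a$- and $b$-invariants are computed via a resolution and are birational invariants by \cite{HTT15}; identifying each case of the classification with the appropriate alternative in the statement is then straightforward.
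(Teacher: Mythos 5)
The paper does not prove this statement: it is quoted verbatim (modulo the typo that $\mathrm{ev}:\cC\ra M$ should read $\mathrm{ev}:\cC\ra X$) from \cite[Theorem 1.3]{BLRT}, so there is no in-paper argument to compare against. Your sketch correctly reconstructs the strategy of the cited proof --- Stein-factorize the evaluation map from the universal family, verify that the finite part is an $a$-cover using the ramification inequality in one direction and the existence of a dominant family of rational curves of the expected dimension on the cover in the other, then apply the classification of $a$-covers of smooth Fano threefolds from \cite{LT17} and \cite{BLRT} --- and you rightly identify that classification as the genuinely hard step, which is exactly the content being imported by the citation.
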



Finally we will need the following result regarding the variety of $H$-lines for a general del Pezzo threefold of Picard rank $1$ and degree $1$:
\begin{theo}[{\cite{T}}]\label{var of lines}
Assume that $X$ is a smooth del Pezzo threefold of Picard rank $1$ and degree $1$, i.e., $X$ is a smooth Fano threefold such that $\Pic(X)=\bZ H$, $-K_{X}=2H$, and $H^{3}=1$. Let $\mathcal R_1=\overline M_{0, 0}(X, 1)$. Suppose $X$ is general in its moduli. Then $\mathcal R_1$ is irreducible and smooth. Furthermore the Abel-Jacobi mapping from $\mathcal R_1$ to the intermediate Jacobian $\mathrm{IJ}(X)$ of $X$ $$\AJ:\mathcal{R}_1 \ra\mathrm{IJ}(X)$$ is generically finite to the image.
\end{theo}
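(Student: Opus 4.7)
I would work with $X$ realized as a smooth sextic hypersurface in $\bP(1, 1, 1, 2, 3)$, the standard model for a del Pezzo threefold of Picard rank $1$ and degree $1$. A line is a smooth rational curve $\ell\subset X$ with $H\cdot \ell=1$, and $-K_X\cdot \ell=2$ gives $\deg N_{\ell/X}=0$, so $N_{\ell/X}\cong \cO(a)\oplus \cO(-a)$ for some $a\geq 0$. The first step is to show that for a general $X$ every line satisfies $a\leq 1$; this yields $H^{1}(\ell, N_{\ell/X})=0$, and deformation theory then gives that $\cR_1=\overline M_{0, 0}(X, 1)$ is smooth at $[\ell]$ of the expected dimension $h^{0}(N_{\ell/X})=2$. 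I would rule out $a\geq 2$ by exhibiting a single specialization $X_0$ in the moduli family on which every line has balanced normal bundle and invoking upper semicontinuity of $h^{1}(N_{\ell/X})$.

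For irreducibility, form the universal incidence $I=\{(x, [\ell])\in X\times \cR_1\mid x\in \ell\}$ with projections $p:I\to X$ and $q:I\to \cR_1$. Since $q$ is a $\bP^{1}$-bundle, $I$ is irreducible iff $\cR_1$ is. The projection $p$ is dominant because lines sweep out $X$, and the dimension count gives $\dim I=3$, so $p$ is finite surjective over a dense open. It therefore suffices to prove that the monodromy of $p$ acts transitively on the finite fiber of lines through a general point $x\in X$. I would do this by producing one explicit pair $(X_0, x_0)$ in the universal family for which the scheme of lines through $x_0$ can be analyzed directly---say by projecting from $x_0$ onto a plane or via the conic bundle structure on the blow-up $\Bl_{x_0}X_0\to \bP^{2}$---and shown to have transitive monodromy, then transporting the conclusion to general $(X, x)$ via irreducibility of the parameter space of pairs.

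For generic finiteness of $\AJ:\cR_1\to \mathrm{IJ}(X)$, it suffices to check injectivity of its differential at one general $\ell$ on one general $X$. This differential factors as
\[
H^{0}(\ell, N_{\ell/X})\longrightarrow H^{1}(\ell, T_X|_{\ell})\longrightarrow H^{2, 1}(X)^{\vee},
\]
pairing deformations of $\ell$ with primitive $(2, 1)$-forms via the residue along $\ell$. Using the Griffiths residue description of $H^{2, 1}(X)$ for the weighted sextic $X\subset \bP(1, 1, 1, 2, 3)$, I would construct two explicit primitive classes and verify that their pairings with a basis of the two-dimensional space $H^{0}(\ell, N_{\ell/X})$ are linearly independent, which suffices since $\dim \cR_1=2$.

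\textbf{Main obstacle.} The crux is irreducibility: establishing monodromy transitivity of the finite cover $p:I\to X$ over a general point. The weighted ambient $\bP(1, 1, 1, 2, 3)$ makes direct Hilbert-scheme computations noticeably more involved than in the cases $H^{3}\geq 2$ handled in \cite{Starr00, Cas04, LT19}, and spurious components of the scheme of lines through a point must be excluded by a careful degeneration; this technical step is the heart of Tikhomirov's argument in \cite{T}.
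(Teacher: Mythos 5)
The statement you are asked to prove is not proved in the paper at all: it is quoted verbatim from Tikhomirov's work \cite{T}, and the authors use it as a black box (it feeds into Proposition~\ref{prop:veryfree} and Lemma~\ref{lemm:twopoints}). So the honest comparison is between your outline and Tikhomirov's theorem itself, and here your proposal is a plan rather than a proof. Each of the three decisive claims --- (i) on a general $X$ every line has normal bundle $\cO(a)\oplus\cO(-a)$ with $a\leq 1$, to be shown ``by exhibiting a single specialization''; (ii) transitivity of the monodromy on the lines through a general point, to be shown ``by producing one explicit pair $(X_0,x_0)$''; (iii) injectivity of the Abel--Jacobi differential, to be shown by an explicit Griffiths-residue computation --- is left as an unexecuted intention, and these are exactly the hard content of the theorem, as you yourself acknowledge in your ``main obstacle'' paragraph. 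An outline that defers precisely the steps carrying the difficulty cannot be counted as a proof of the statement.

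Beyond incompleteness, one of the concrete tools you propose for the irreducibility step would fail in degree $1$. For a del Pezzo threefold with $H^3=1$ there is no conic bundle structure on $\Bl_{x_0}X\to\bP^2$ and no meaningful ``projection from $x_0$ onto a plane'': the only natural map to $\bP^2$ comes from $|H|$, which has a single base point (independent of $x_0$) and whose resolution is an elliptic fibration, not a conic bundle; moreover $X$ sits in $\bP(1,1,1,2,3)$, where linear projection from a point has no good meaning. This is precisely why the degree-$1$ case resisted the techniques that work for $H^3\geq 2$ in \cite{Starr00}, \cite{Cas04}, \cite{LT19}, and why the authors appeal to \cite{T} (note also that Theorem~\ref{var of lines} asserts more than smoothness of the Hilbert scheme of lines: you would additionally need to identify $\overline M_{0,0}(X,1)$ with it, which is easy, and to control the Abel--Jacobi map, which is not). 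One smaller remark: monodromy transitivity of $p:I\to X$ only handles components of $\cR_1$ whose lines dominate $X$; you should note explicitly that, once pure dimension $2$ is known, a non-dominating component would force a plane in $X$, which is excluded since $\Pic(X)=\bZ H$ with $-K_X=2H$.
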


For definitions of intermediate Jacobians and the Abel-Jacobi maps, see \cite[Section 12]{Voisin}.

\section{$H$-conics on del Pezzo threefolds of degree $1$}
\label{sec:H-conics}

Let $X$ be a del Pezzo threefold of Picard rank $1$ and degree $1$, i.e., $X$ is a smooth Fano threefold such that $\Pic(X)=\bZ H$, $-K_{X}=2H$, and $H^{3}=1$. In this paper we assume that $X$ is general in its moduli. We denote the morphism of $X$ associated to the complete linear system of $-K_{X}$ by $\Phi_{|-K_{X}|} : X \lra W\subset\bP^{6}$. This is a degree $2$ finite morphism to the Veronese cone $W$ in $\bP^{6}$ ramified along the intersection of $W$ and a cubic hypersurface avoiding the cone point. We denote the involution associated to the double cover $\Phi_{|-K_{X}|}:X\ra W$ by $\iota : X \to X$.
Let $S$ be the pullback of a hyperplane on $\mathbb P^6$ so that $S\sim2H$, i.e., $S\in|-K_{X}|=\bP^{6}$.

First let us describe surfaces $Y$ of $X$ whose $a$-invariants are same as the $a$-invariant of $X$:

\begin{lemm}\label{iitaka one}
Let $Y$ be a surface on $X$ and let $\beta:\wt Y\ra Y$ be the resolution. If $a(Y, H|_Y) = a(X, H)=2$, then the Iitaka dimension $\kappa(2\beta^*H+K_{\wt Y})$ is $1$.
\end{lemm}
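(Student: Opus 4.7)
The upper bound $\kappa(D)\le 1$ for $D:=2\beta^{*}H+K_{\wt Y}$ is immediate: since $a(Y, H|_Y)=a(X,H)=2$, the class $D$ lies on the boundary of $\Eff^{1}(\wt Y)$ by definition of the Fujita invariant, so $\vol(D)=0$, and non-bigness on a smooth surface forces $\kappa(D)\le 1$.

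For $\kappa(D)\ge 1$, my plan is to produce a ruling $\pi:\wt Y\to B$ onto a curve whose fibers have $\beta^{*}H$-degree one, i.e.\ to show that $Y$ is ruled by $H$-lines. Since $a(Y,H|_Y)>0$, $Y$ is uniruled by \cite{BDPP13}, so $\wt Y$ admits a covering family of rational curves. Bend and Break on a surface shows that the minimal such family has a general member $C$ with $C^{2}=0$ and $K_{\wt Y}\cdot C=-2$ (adjunction), organizing into a ruling $\pi:\wt Y\to B$. The duality of \cite{BDPP13} yields
\[
a(Y,H|_Y)=\sup_{\alpha\text{ movable}}\frac{-K_{\wt Y}\cdot\alpha}{\beta^{*}H\cdot\alpha},
\]
so the hypothesis $a=2$ should force $\beta^{*}H\cdot C=1$. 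Granted this, $D\cdot C=2-2=0$, so $D\sim_{\bQ}\pi^{*}D_B$ for some $\bQ$-divisor $D_B$ on $B$. Writing $Y\sim nH$ on $X$ and applying the projection formula together with adjunction (valid since $Y$ is Cartier in a smooth variety, hence Gorenstein), one obtains
\[
D\cdot\beta^{*}H=2(\beta^{*}H)^{2}+K_{\wt Y}\cdot\beta^{*}H=2n+n(n-2)=n^{2}>0,
\]
so $\deg D_B=n^{2}$, whence $\kappa(D)=\kappa(D_B)=1$.

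The main obstacle is the extremality step: verifying that $a=2$ forces $\beta^{*}H\cdot C=1$. On a Hirzebruch model $\bF_n$ of $\wt Y$, the movable cone of curves has the two generators $F$ (fiber) and $\Sigma_\infty$ (positive section); a direct case analysis using integrality of intersection numbers together with the ampleness bound $\beta^{*}H\cdot\Sigma_\infty>n(\beta^{*}H\cdot F)$ shows that $\Sigma_\infty$ can witness the value $a=2$ only in the symmetric case $n=0$, where $\bF_0=\bP^1\times\bP^1$ has a second ruling whose fibers are themselves $H$-lines. For non-minimal or non-Hirzebruch resolutions the same argument should apply after reduction to the minimal ruled model via blowdowns. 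A minor technicality --- the correction to $K_{\wt Y}$ coming from exceptional discrepancies of the resolution $\beta$ --- vanishes against $\beta^{*}H$ by the projection formula, so the identity $D\cdot\beta^{*}H=n^{2}$ remains valid regardless of the singularities of $Y$.
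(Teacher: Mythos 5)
The heart of your argument is the ``extremality step'': that $a(Y,H|_Y)=2$ forces a covering family (organized into a ruling) with $\beta^*H\cdot C=1$. This is precisely the content of the lemma, and it is left unproven (``should force''). The BDPP duality only says that the supremum of $-K_{\wt Y}\cdot\alpha/\beta^*H\cdot\alpha$ over movable classes equals $2$; it does not say the supremum is computed by the minimal covering family of rational curves, nor that the extremal movable class is the fiber class of a fibration. Indeed, in the problematic case the lemma must exclude --- $\kappa(2\beta^*H+K_{\wt Y})=0$, where $2\beta^*H+K_{\wt Y}\equiv$ an exceptional/rigid class (e.g.\ $(\wt Y,\beta^*H)$ birational to a quadric with $\cO(1)$, where $2\beta^*H+K_{\wt Y}\equiv 0$) --- every movable class pairs to zero, so no ruling is singled out by duality, and your later positivity $D\cdot\beta^*H>0$ is exactly what fails numerically. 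Your proposed fix, a case analysis on Hirzebruch models, is only sketched and in effect amounts to redoing the classification of polarized surfaces with $\kappa(2\beta^*H+K_{\wt Y})=0$; the paper instead quotes H\"oring's classification to identify the unique adjoint-rigid possibility as $(Q,\cO(1))$ and then kills it by adjunction in $X$: $Y\sim eH$ gives $K_Y\sim(e-2)H|_Y$, which together with $2H|_Y+K_Y=0$ forces $eH|_Y\sim 0$, a contradiction. So the paper's route is ``rule out $\kappa=0$, and $\kappa\le 1$ since the class is on the boundary,'' whereas your route tries to build the Iitaka fibration by hand and leaves its key input unestablished.

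Two further steps would also need repair even granting the ruling. First, the claim that the correction to $K_{\wt Y}$ ``vanishes against $\beta^*H$'' is wrong when $Y$ is not normal: the resolution factors through the normalization $\nu:Y'\to Y$, and $\omega_{Y'}=\nu^*\omega_Y(-\mathcal{C})$ where the conductor $\mathcal{C}$ is a curve dominating the non-normal locus of $Y$, hence \emph{not} $\beta$-exceptional; thus $D\cdot\beta^*H=n^2-\mathcal{C}\cdot\nu^*H$, and positivity is not automatic (only the genuinely exceptional discrepancies over the normal model die against $\beta^*H$). Second, $D\cdot C=0$ does not immediately give $D\sim_{\bQ}\pi^*D_B$: one must pass through the Zariski decomposition ($P\cdot F=0$ and $P^2\ge 0$ force $P\equiv cF$, and $N$ vertical), and then numerical information alone does not yield sections, so $\kappa(D)\ge 1$ still requires an argument (e.g.\ Riemann--Roch using $F\cdot K_{\wt Y}=-2$) rather than the bare identification with a pullback. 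Your upper bound $\kappa\le 1$ from non-bigness is fine, but as written the lower bound --- the actual assertion of the lemma --- is not proved.
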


\begin{proof}
Assume that $\kappa(2\beta^*H+K_{\wt Y})=0$. By \cite{Hor10} (see also \cite[Theorem 8.10]{BLRT21}), $(\wt Y, \beta^*H)$ is birationally equivalent to a quadric surface $(Q, \cO(1))$ as a polarized surface. The resolution $\beta$ factors as $\wt Y\ra Y'\ra Y$ where $Y'\ra Y$ is the normalization. We denote it $\beta':Y'\ra Y$. Since $\beta'^*H$ is ample, we conclude that $$(Y', \beta'^*H)\cong(Q, \cO(1)).$$ Since $Y$ is a divisor in a smooth variety, $K_Y$ still makes sense. Hence we must have $$2H+K_Y\sim 0.$$ 
Indeed, since $(Y', \beta'^*H)$ is isomorphic to $(Q, \cO(1))$, we have $2\beta'^*H + K_{Y'} \sim 0$.
By taking the pushforward, our assertion follows.
Let $e\in\bZ_{>0}$ be a positive integer such that $Y\sim eH$. Thus by adjunction we have $K_Y\sim(e-2)H$. But these relations imply that $eH\sim0$, a contradiction. 
\end{proof}

Next we will prove that a general rational curve of $H$-degree $\geq 2$ is very free following arguments in \cite[Lemma 8.1]{LT21}:

\begin{prop}[{\cite[Lemma 8.1]{LT21}}]
\label{prop:veryfree}
Let $X$ be a smooth del Pezzo threefold of Picard rank $1$ and degree $1$. Assume that $X$ is general in its moduli. Let $M\subset\overline M_{0, 0}(X, d)$ be a dominant component generically parametrizing birational stable maps. When $d\geq2$, a general member $C\in M$ is very free. 
\end{prop}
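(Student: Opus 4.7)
The plan is to assume for contradiction that a general $(C, f) \in M$ is free but not very free, use the deformations of $f$ fixing $f(p) = x$ at a general $x \in X$ to trace out a surface $Y_{x} \subset X$, and then derive a contradiction by combining the bound $a(Y_{x}, H|_{Y_{x}}) \leq a(X, H) = 2$ from Theorem~\ref{proper closed} with Lemma~\ref{iitaka one} and the Iitaka fibration of $2\widetilde{H} + K_{\widetilde{Y}_{x}}$ on a resolution.

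Since $M$ is a dominant component, a general $f : \bP^{1} \to X$ is free, so $f^{*}T_{X} = \cO(a_{1}) \oplus \cO(a_{2}) \oplus \cO(a_{3})$ with $a_{1} \geq a_{2} \geq a_{3} \geq 0$ and $a_{1} + a_{2} + a_{3} = 2d$. If $f$ is not very free then $a_{3} = 0$; the degenerate case $a_{2} = 0$ is ruled out at the outset because the fiber of $\mathrm{ev} : \overline{M}_{0,1}(X, d)|_{M} \to X$ over a general point would then be $0$-dimensional, contradicting the expected fiber dimension $2d - 2 \geq 2$ for $d \geq 2$. Hence $a_{1} \geq a_{2} \geq 1$. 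Now fix a general $x = f(p)$ and let $\cF_{x} \subset \overline{M}_{0,1}(X, d)|_{M}$ be the component of $\mathrm{ev}^{-1}(x)$ containing $(f, p)$, of dimension $2d - 2$. A tangent computation at a general $q \in f(\bP^{1})$ shows that the differential of the second evaluation on $\cF_{x}$ has $2$-dimensional image equal to the image of $\cO(a_{1}) \oplus \cO(a_{2}) \hookrightarrow f^{*}T_{X}$ at $q$, so $Y_{x} := \bigcup_{(f', p') \in \cF_{x}} f'(\bP^{1})$ is a surface in $X$. By the dominance of $M$ and Theorem~\ref{proper closed}, $Y_{x} \not\subset V$, hence $a(Y_{x}, H|_{Y_{x}}) \leq 2$. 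Passing to a resolution $\beta : \widetilde{Y}_{x} \to Y_{x}$ with $\widetilde{H} = \beta^{*}H$ and letting $\widetilde{C}$ denote the strict transform of $f(\bP^{1})$, the class $2\widetilde{H} + K_{\widetilde{Y}_{x}} \in \Eff^{1}(\widetilde{Y}_{x})$ paired with the nef movable class $[\widetilde{C}]$ gives $-K_{\widetilde{Y}_{x}} \cdot \widetilde{C} \leq 2d$. Conversely $\cF_{x}$ injects into the fiber of the evaluation on $\overline{M}_{0,1}(\widetilde{Y}_{x}, [\widetilde{C}])$ over $\tilde{x} := \beta^{-1}(x)$, whose tangent dimension at a general point equals $h^{0}(N_{\widetilde{C}/\widetilde{Y}_{x}}(-\tilde{p})) = \deg N_{\widetilde{C}/\widetilde{Y}_{x}}$; combined with the identity $[\widetilde{C}]^{2} = \deg N_{\widetilde{C}/\widetilde{Y}_{x}} = -K_{\widetilde{Y}_{x}} \cdot \widetilde{C} - 2$ from $\bP^{1}$-adjunction, this forces $-K_{\widetilde{Y}_{x}} \cdot \widetilde{C} \geq 2d$. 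Hence equality holds, $a(\widetilde{Y}_{x}, \widetilde{H}) = 2$, and $[\widetilde{C}]^{2} = 2d - 2 > 0$.

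By Lemma~\ref{iitaka one}, $\kappa(2\widetilde{H} + K_{\widetilde{Y}_{x}}) = 1$, so there is an Iitaka fibration $\pi : \widetilde{Y}_{x} \to B$ onto a smooth curve whose general fiber $F$ is numerically proportional, modulo fixed components supported in fibers, to $2\widetilde{H} + K_{\widetilde{Y}_{x}}$; using $(2\widetilde{H} + K_{\widetilde{Y}_{x}}) \cdot \widetilde{C} = 0$ together with the movability of $\widetilde{C}$, one deduces $F \cdot \widetilde{C} = 0$, so $\pi|_{\widetilde{C}}$ is constant and $\widetilde{C}$ is contained in a single fiber $F_{0}$. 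Writing $F_{0} = m\widetilde{C} + R$ with $m \geq 1$ and $R$ effective whose irreducible components are distinct from $\widetilde{C}$, the identity $F_{0} \cdot \widetilde{C} = 0$ gives
\[
0 = m [\widetilde{C}]^{2} + R \cdot \widetilde{C} \geq m(2d - 2) > 0
\]
for $d \geq 2$, a contradiction. The main obstacle I anticipate is justifying the equality chain $-K_{\widetilde{Y}_{x}} \cdot \widetilde{C} = 2d$: one must check that a general member of $\cF_{x}$ avoids the exceptional locus of $\beta$ and the singular locus of $Y_{x}$, so that the injection $\cF_{x} \hookrightarrow \overline{M}_{0,1}(\widetilde{Y}_{x}, [\widetilde{C}])_{\tilde{x}}$ is well-defined and the adjunction identity on $\bP^{1}$ applies; this should follow from the generality of $x$ and a standard dimension count, but requires verification.
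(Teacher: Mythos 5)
Your proposal starts along the same lines as the paper (curves through a general point $x$ sweep out a surface, compare Fujita invariants on a resolution), but the decisive step is exactly the one you flag at the end, and it does not survive scrutiny. Your lower bound $-K_{\widetilde Y_x}\cdot\widetilde C\geq 2d$ needs the entire $(2d-2)$-dimensional family of strict transforms to pass through a \emph{single} point $\tilde x$ of the smooth surface $\widetilde Y_x$, i.e.\ it needs $\beta$ to be an isomorphism (or at least finite, with a common lift of $x$) over $x$. You cannot get this from ``generality of $x$'': every member of $\cF_x$ passes through $x$ by construction and $Y_x$ varies with $x$, so there is no way to choose $x$ off the singular locus of $Y_x$ --- the argument is circular. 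If $x$ is a point over which any resolution has positive-dimensional fiber, the strict transforms meet the exceptional locus at \emph{moving} points, the ``curves through a fixed point'' bound $h^0(N(-\tilde p))$ does not apply, and the only bound available is $-K_{\widetilde Y_x}\cdot\widetilde C\geq 2d-1$. With $2d-1$ instead of $2d$ one gets $(2\beta^*H+K_{\widetilde Y_x})\cdot\widetilde C=1>0$, your Iitaka-fibration contradiction evaporates, and the case $a=1$, $\kappa=1$ is numerically consistent. (There are also secondary issues: the general member of $\cF_x$ may be singular at $x$, so the normal-bundle/adjunction identities must be run for the stable map, with ``$\leq$'' in place of ``$=$'' for the tangent dimension.)

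This is not a removable technicality: the scenario your argument would exclude for free is precisely the one the paper must work to rule out. With the exact count $-K_{\widetilde S}\cdot\widetilde C=2d-1$ (obtained by \emph{choosing} the resolution so that $\beta^{-1}(x)$ is divisorial), the paper is left with $a(S,2H)\in\{1,\tfrac34\}$; the case $a=1$ is eliminated by showing the base of the Iitaka fibration gives moving rational curves in the variety of $H$-lines $\cR_1$, contradicting Tikhomirov's theorem that $\cR_1$ is smooth, irreducible, and has generically finite Abel--Jacobi map to $\mathrm{IJ}(X)$ (this is where the hypothesis that $X$ is general in moduli enters), and the case $a=\tfrac34$ for $d=2$ uses \cite[Theorem 5.5]{LT21} and \cite{S}. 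Your proof invokes the generality of $X$ nowhere, which is a strong signal that the missing step is essential rather than a standard verification. To repair the argument you would have to either justify the common-point claim on some resolution (which fails exactly in the dangerous case) or import the intermediate Jacobian/$H$-line input as the paper does.
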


\begin{proof}
The proof below is taken from \cite[Lemma 8.1]{LT21}. We include it for completeness of the paper. It is enough to show that the evaluation map $$\mathrm{ev}_{2}:M^{(2)}\ra X\times X$$ is dominant where $M^{(2)}$ is the component of $\overline M_{0, 2}(X, d)$ above $M$. Suppose that it is not dominant. Since there is at least a one-parameter family of curves through a general point, the image must be an irreducible divisor $D$ in $X\times X$.

From the assumption we have $-K_X\cdot C = 2H\cdot C=2d\geq4$. For the fiber of the evaluation map $\mathrm{ev}:M\ra X$ at a general point $x_{1}\in X$ , we have $$\dim(\mathrm{ev}^{-1}(x_{1}))=2d-2\geq2.$$ Take a component $N\subset\mathrm{ev}^{-1}(x_{1})$ and let $S$ be the surface swept out by $N$. Then there exists a smooth resolution $\beta:\widetilde S\ra S$ such that $\beta^{-1}(x_{1})$ is divisorial in $\wt S$. Let $\widetilde C$ be the strict transform of $C$. Then we must have 
$$-K_{\widetilde{S}}\cdot\widetilde{C}-1=2d-2,$$
hence we have 
$$-K_{\widetilde{S}}\cdot\widetilde{C}=2d-1.$$ 
Since $a(S, 2H)2\beta^{*}H+K_{\wt S}$ is pseudo-effective, it follows that $$a(S, 2H)2\beta^{*}H\cdot\wt C+K_{\wt S}\cdot\wt C\geq0.$$ Thus we obtain $$a(S, 2H)\geq\frac{2d-1}{2d}.$$

Now we can write $a(S, 2H)=\frac{2}{n}$ or $\frac3n$ by \cite[Lemma 4.5]{LT21} where $n$ is a positive integer. When $d\geq3$, we must have $a(S, 2H)=1$. When $d=2$, we have $a(S, 2H)=1$ or $\frac{3}{4}$. We will argue for each case. 

When $a(S, 2H)=1$, we have $\kappa(2\beta^{*}H+K_{\wt S})=1$ by Lemma \ref{iitaka one}. In this case the canonical map $\pi:\wt S\ra B$ associated to $2\beta^{*}H+K_{\wt S}$ exists and we have the Zariski decomposition $$2\beta^{*}H+K_{\wt S}=eF+E$$ with a general fiber $F\subset\wt S$ of $\pi$, an effective divisor $E$ and a positive integer $e$. Note that $F$ satisfies $\beta^*H\cdot F = 1$ so its image on $X$ is an $H$-line. Then $$2\beta^{*}H\cdot\wt C+K_{\wt S}\cdot\wt C=2d-(2d-1)=1$$ implies that $(eF+E)\cdot\wt C>0.$ This means that $\wt C$ maps to $B$ dominantly and this implies that $B$ is rational. Thus we conclude that the variety of $H$-lines $\mathcal R_1$ is covered by rational curves $B$ as $x_1$ varies. Now let us consider the Abel-Jacobi mapping $$\AJ:\mathcal R_1 \ra\mathrm{IJ}(X)$$ where IJ$(X)$ is the intermediate Jacobian of $X$. By Theorem \ref{var of lines} $\AJ$ is generically finite to the image. Since $\mathrm{IJ}(X)$ has no rational curves, all such curves contained in $\mathcal R_1$ are contracted by this mapping. Thus there are only finitely many rational curves on $\mathcal R_1$ which contradicts with the fact that $B$ varies as $x_1$ varies.

When $a(S, 2H)=\frac{3}{4}$, let $S'$ be the normalization of $S$. From \cite[Lemma 2.4 and Proposition 2.5]{S}, $S'$ is smooth along the image of $\wt C$ which is the strict transform of $C$. Furthermore take the minimal resolution $\beta : \wt S' \to S$. Then there is a birational map $f:\wt S'\ra\bP^2$ by \cite[Theorem 5.5]{LT21} and it maps the 2-dimensional family of $\wt C'\subset \wt S'$ which is the strict transform of $C$ to the lines passing through one point on $\mathbb P^2$. Indeed, it follows from  \cite[Theorem 5.5]{LT21} that we have $2\beta^*H = f^*\mathcal O(4) - K_{\wt S'/\mathbb P^2}$ so that $f_*\wt C'\cdot\mathcal O(1) = 1$. Note that we have $d = 2$ in our situation. Then such lines admit only 1-dimensional family on $\bP^2$, a contradiction.
\end{proof}


Next we start to analyze $H$-conics. First we show that a general $H$-conic maps birationally to the image under $\Phi_{|-K_X|}$:

\begin{lemm}\label{quartic}
Let $M\subset\overline{M}_{0,0}(X,2)$ be a dominant component generically parametrizing birational stable maps. Let $C\in M$ be a general member. Then $\Phi_{|-K_{X}|}|_{C}:C\ra C'$ is birational and $C'$ is a quartic rational curve in $\bP^{6}$ where $C'=\Phi_{|-K_{X}|}(C)$.
\end{lemm}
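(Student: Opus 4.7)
\noindent
The plan is to show that $\Phi|_C$ is birational onto its image, from which the claim follows immediately: the projection formula gives $\deg_{\bP^6}(C')=\Phi^{*}\cO_{\bP^{6}}(1)\cdot C=-K_X\cdot C=4$, and $C'$ is automatically rational since $C\cong\bP^1$. Here $\Phi:=\Phi_{|-K_X|}$ is finite of degree $2$ with covering involution $\iota$, so $\Phi|_C$ has degree $1$ or $2$, and degree $2$ occurs precisely when $\iota(C)=C$ with $\iota|_C\ne\mathrm{id}$. Since the ramification divisor $R$ of $\Phi$ satisfies $K_R=H|_R$ and is thus of general type containing no rational curves, no $H$-conic lies in $R$, so the degree-$2$ case is exactly the case that $\iota$ preserves $C$ set-theoretically. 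The lemma reduces to ruling this out for a general $C\in M$.

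Suppose for contradiction that $\iota(C)=C$ for all $C$ in a dense open $U\subseteq M$. Then $C\to\Phi(C)\cong\bP^1$ is a genuine degree-$2$ cover, so by Riemann--Hurwitz it is ramified at exactly $2$ points, which necessarily lie on $R$. Since $\Phi$ is branched along a cubic section of $W$ and $\Phi^*H_W=2H$, the pullback relation $\Phi^{*}(\mathrm{branch})=2R$ gives $R\sim 3H$, whence $C\cdot R=6$. Combined with only $2$ points in the set $C\cap R$, this forces $C$ to be tangent to $R$ to order $3$ at each of these two points.

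To derive a contradiction I propose a dimension count via the morphism
\[
F:\Mor(\bP^1,X,2H)\to|\cO_{\bP^1}(6)|\cong\bP^6,\qquad f\mapsto f^*R.
\]
Divisors of type $3q_1+3q_2$ form a $2$-dimensional locus in $\bP^6$. If $F$ is dominant, its preimage in $\Mor$ (which has dimension $-K_X\cdot 2H+\dim X=7$) has codimension $4$, hence dimension $3$. This preimage is $\mathrm{PGL}_2$-invariant, so quotienting by $\mathrm{Aut}(\bP^1)=\mathrm{PGL}_2$ yields a subvariety of $M$ of dimension $\leq 0$, strictly less than $\dim M=4$. This contradicts the assumption that $\iota$-invariant curves are dense in $M$.

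The main obstacle is verifying the dominance of $F$, equivalently the surjectivity of the differential
\[
dF_f:H^0(f^*T_X)\to\bigoplus_{q\in f^{-1}(R)}N_{R/X}|_{f(q)}\cong k^6
\]
at a general $f\in M$. This is where the very freeness established in Proposition~\ref{prop:veryfree} comes in: for such $f$ we have $f^*T_X=\cO(2)\oplus\cO(1)^{\oplus 2}$, and one verifies that for generic relative position of $R$ and $f$ the kernel $\cK$ of the surjection $f^*T_X\twoheadrightarrow\bigoplus_q N_{R/X}|_{f(q)}$ has balanced splitting type $\cO\oplus\cO(-1)^{\oplus 2}$, so that $H^1(\bP^1,\cK)=0$ yields the required surjectivity via the associated long exact sequence.
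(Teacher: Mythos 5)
Your reduction is sound as far as it goes: since $\Phi_{|-K_X|}$ has degree $2$, the restriction to $C$ has degree $1$ or $2$ (this disposes of the degree-$4$-onto-a-line case even more cleanly than the paper does), and the only bad case is $\iota(C)=C$ with $\iota|_C\neq\mathrm{id}$; Riemann--Hurwitz together with $R\sim 3H$ then correctly gives that such a $C$ meets $R$ in exactly two points with total multiplicity $6$. Two inaccuracies along the way: the local multiplicities at the two fixed points are odd (because $w\circ\iota=-w$ restricts to an odd function on $C$), so the pattern can be $(1,5)$ as well as $(3,3)$ --- tangency of order $3$ at both points is \emph{not} forced, though this does not change the dimension of your target locus; and ``$K_R$ ample, hence no rational curves on $R$'' is a non sequitur (surfaces of general type can contain rational curves) --- what you actually need is only that a general member of the dominant family $M$ is not contained in the divisor $R$, which is immediate.

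The genuine gap is the dominance of $F$, i.e.\ the surjectivity of $dF_f$ at a general $f\in M$, which you defer to ``generic relative position of $R$ and $f$.'' This is circular: you are arguing by contradiction under the hypothesis that every general $C\in M$ is $\iota$-invariant, and under that hypothesis the relative position is never generic for general $f\in M$ --- the image of the $7$-dimensional component of $\Mor(\bP^1,X,2H)$ lying over $M$ is contained in the $2$-dimensional locus of degree-$6$ divisors supported at two points, so by generic smoothness in characteristic $0$ the rank of $dF_f$ at a general $f\in M$ is at most $2$, and the surjectivity you propose to verify is actually false there. Put differently, ``$f$ meets $R$ in $6$ reduced points'' already implies $C$ is not $\iota$-invariant, so assuming genericity of the intersection is assuming the conclusion. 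To salvage the strategy you would need to exhibit some smooth point of this component of $\Mor$ (necessarily a special member of $M$, under your hypothesis) at which $dF$ is surjective, or bound the locus $\{f: f^*R \text{ supported at } 2 \text{ points}\}$ by an argument not passing through dominance of $F$; the proposal does neither. The paper sidesteps this entirely by counting downstairs on $W$: the family of conics $C'\subset W$ tangent to the branch cubic at two points has dimension at most $5-1-1=3<4=\dim M$, a bound that requires no genericity statement about $C\cap R$. (Also, had dominance been available, the refined ``dimension $3$'' count and the $\mathrm{PGL}_2$ quotient are unnecessary: dominance alone makes the preimage of the $2$-dimensional locus a proper closed subset of the component, already contradicting density.)
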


\begin{proof}
Let $C$ be a general very free rational curve on $X$ such that $H.C=2$. Then there are three possibilities for the image $C'$ of $C$ via $\Phi_{|-K_{X}|}$:
\begin{itemize}
\item$C'$ is a line and $\Phi_{|-K_{X}|}|_C: C\ra C'$ has degree 4;
\item$C'$ is a conic and $\Phi_{|-K_{X}|}|_C: C\ra C'$ has degree 2;
\item$C'$ is a quartic rational curve and $\Phi_{|-K_{X}|}|_C: C\ra C'$ is birational.
\end{itemize}

We will consider these situations. For the first case every line on $W$ passes through the singular point, but this contradicts with the assumption that $C$ is general so that $C$ avoids any codimension $2$ locus. 

For the second case note that since $C$ and $C'$ are smooth by \cite[3.14 Theorem]{Kollar} , we have the Hurwitz formula $$2g(C)-2=2(2g(C')-2)+2.$$ Let $Q$ be a cubic hypersurface in $\bP^6$ such that $\Phi_{|-K_{X}|}:X\ra W$ is ramified along $Q\cap W$. Since we have $S'\cdot C'=2$ where $S'$ is a hyperplane class in $\mathbb P^6$, we have $Q\cdot C'=6$ so that the degree of the ramification divisor of $\Phi_{|-K_{X}|}|_C: C\ra C'$ is less than or equal to 6. Since we have $-K_{W}\sim\frac{5}{2}S'$, we conclude $-K_{W}\cdot C'=5$. But then among points in $C'\cap Q$ there must be two points with multiplicity 2 and the dimension decreases by 1 for having a multiplicity 2 point, so that one can show that the dimension is $5-1-1=3<4$  and this contradicts with $-K_{X}\cdot C=4$. Thus we conclude that for a general member $C$ we only have the third case.
\end{proof}


Next we show that for a general very free $H$-conic $C$, its image via $\Phi_{|-K_X|}$ spans $\mathbb P^4$ so that it is a normal quartic rational curve:
\begin{lemm}
Let $M\subset\overline{M}_{0,0}(X,2)$ be a dominant component generically parametrizing birational stable maps. Let $C\in M$ be a general member and $C'=\Phi_{|-K_{X}|}(C)$.
Then $C'$ spans a linear subspace $\bP^{4}\subset\bP^{6}$.
\end{lemm}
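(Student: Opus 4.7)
The approach is a proof by contradiction, combining the geometry of the Veronese cone $W$ with a dimension count inside $M$. By Lemma \ref{quartic}, $C'$ is a degree-$4$ rational curve in $\bP^6$, so its linear span has dimension at most $4$. I would suppose for contradiction that $\dim\langle C'\rangle\leq 3$; then the kernel $L$ of the restriction $H^0(X,-K_X)\to H^0(C,-K_X|_C)$ has $\dim L\geq 7-4=3$. Picking a $3$-dimensional subspace $L_0\subseteq L$, its base scheme in $X$ equals $\Phi_{|-K_{X}|}^{-1}(\bP^3\cap W)$ for the $\bP^3\subset\bP^6$ cut out by $L_0$. Since $C$ lies in this base scheme and $\Phi_{|-K_{X}|}$ is finite, $\dim(\bP^3\cap W)\geq 1$.

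Next, I would analyze $\bP^3\cap W$ using the cone structure of $W$ over the Veronese surface $V\subset\bP^5$. Since $V$ contains no lines, $\bP^3\cap W$ has positive dimension only when $\bP^3$ passes through the vertex $v$ of $W$; projecting from $v$ then reduces the analysis to $\bP^2\cap V$, which is either $0$-dimensional (so $\bP^3\cap W$ is a union of at most $4$ lines through $v$) or a conic on $V$ (so $\bP^3\cap W$ is a quadric cone $Q$). The line-union case is excluded because $C'$ is an irreducible quartic and cannot lie on a single line through $v$. Hence $\bP^3\cap W=Q$ and $B:=\Phi_{|-K_{X}|}^{-1}(Q)$ is a surface in $X$ containing $C$. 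To identify its class, note that the sections in $L_0$ share a common factor cutting out $S_0\sim eH$, with quotients lying in $H^0(X,(2-e)H)$; the requirement $h^0(X,(2-e)H)\geq 3$, combined with $h^0(X,H)=3$ (from the weighted projective model $X\subset\bP(1,1,1,2,3)$) and $h^0(X,\cO_X)=1$, forces $e=1$. So $C\subset S_0\sim H$.

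To conclude, a dimension count gives the contradiction. The family of $H$-conics contained in some $H$-divisor has dimension at most $\dim|H|+1=2+1=3$, where the second summand uses that a general $S\in|H|$ is a del Pezzo surface of degree $1$ (by adjunction $K_S=-H|_S$ and $(-K_S)^2=H^3=1$) on which conics form a $1$-dimensional family. Since $\dim M=-K_X\cdot C+\dim X-3=4$, this locus is a proper subvariety of $M$, so a general $C\in M$ cannot be contained in any $H$-divisor, contradicting $C\subset S_0$. Hence $\dim\langle C'\rangle=4$. I expect the most delicate step to be the enumeration of $\bP^3\cap W$: one has to use the cone structure and the fact that $V$ contains no lines to rule out the line-union configuration and reduce to the quadric-cone case, after which the class computation $S_0\sim H$ and the dimension count follow routinely.
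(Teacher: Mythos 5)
Your overall strategy is viable and, in its decisive step, parallels the paper's: both arguments reduce the degenerate situation to the configuration where $C$ lies in a member of $|H|$ (equivalently, $C'$ lies in a quadric cone $Q \subset W$ cut out by a $\bP^3$ through the vertex) and then kill it by a dimension count against $\dim M = 4$. The paper reaches that configuration by degenerating hyperplane sections through the vertex and finishes by counting tangency conditions of $C'$ with the branch cubic inside the half-section, whereas you use the span directly and count conics on degree-one del Pezzo members of $|H|$; both counts land on $3<4$. However, two of your steps are not correct as written. First, the claim that $\bP^3\cap W$ can be positive-dimensional only when $\bP^3$ contains the vertex $v$ is false, and ``$V$ contains no lines'' is not the relevant input: $V$ contains conics (Veronese images of lines), each spanning only a plane, so a $\bP^3$ containing such a plane but missing $v$ already meets $W$ in a curve. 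What you actually need is that the irreducible quartic $C'$ cannot lie in a $\bP^3$ avoiding $v$: projection from $v$ would map $C'$ isomorphically onto an irreducible quartic on $V$, i.e.\ the Veronese image of an irreducible conic, which is a rational normal quartic spanning a $\bP^4$ --- exactly the spanning fact the paper uses in its non-degenerate case. This is an easy repair, but the step as stated is wrong and the stated justification does not prove it.

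Second, the bound ``$\leq \dim|H|+1 = 3$'' for the locus of $H$-conics contained in members of $|H|$ is only justified over the open locus of smooth members: a component of the incidence variety dominating $M$ could a priori lie over the discriminant, i.e.\ over a curve (or finitely many points) of singular integral members each carrying a family of conics of dimension $\geq 3$. You need to exclude this. It can be done with tools already in the paper: a fixed integral $S \in |H|$ with a $3$-dimensional family of $H$-conics would give, on a resolution $\beta:\wt S \to S$, a dominant family of free rational curves in a fixed class $\alpha$ with $\beta^*H\cdot\alpha = 2$ and $(2\beta^*H + K_{\wt S})\cdot\alpha \leq 0$; strict inequality forces $a(S,-K_X)>1$, contradicting Theorem~\ref{theo:higher_a-inv} since $-K_X = 2H$ has even degree on every curve and so $X$ has no $-K_X$-lines, while equality forces $a(S,2H)=1$, and then Lemma~\ref{iitaka one} makes the adjoint-trivial moving curves fibres of the Iitaka fibration, a $1$-parameter family only. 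With these two repairs (and noting, as in Lemma~\ref{quartic}, that the general $C$ avoids $\Phi_{|-K_X|}^{-1}(v)$ so the projection from $v$ is defined on $C'$), your argument goes through and is comparable in rigor to the paper's own count.
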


\begin{proof}
Let $C'\subset W$ be a general quartic rational curve avoiding cone point. Then we have $-K_{W}\cdot C' = 10$ so that the parameter space of normal quartic rational curves on $W$ has dimension 10. Assume that we have a general hyperplane $S'$ containing $C'$ but not containing the cone point. There is a composition $S'\cap W\cong\bP^{2}\hookrightarrow S'\hookrightarrow\bP^{6}$ which is the Veronese embedding of $\mathbb P^2$ up to linear transformations. Considering such the embedding as 
$$
\bP^{2}\hookrightarrow\bP^{5} ; (x_{0}: x_{1}: x_{2})\mapsto(x_{0}^{2}: x_{1}^{2}: x_{2}^{2}: x_{0}x_{1}: x_{1}x_{2}: x_{0}x_{2}),
$$
with $S'\cong\bP^{5}$. One can see that $C'$ is realized as some conic in $\bP^{2}$ and its image spans $\bP^{4}$. 

Next assume all $S'$ containing $C'$ contain the cone point. In this case we will prove that there exists $S'$ containing $C'$ such that $W\cap S'$ is not integral.
There exists 1-parameter family $S'_{t}\supset C'$ of hyperplanes where $t$ is another variable. By assumption $S'_{t}$ contains the cone point. Now the intersection $W\cap S'_{t}$ is a quadratic cone and it degenerates to the union of two planes. Indeed, $W\cap S'_{t}$ is a cone over a quartic in the Veronese surface which is a conic in $\mathbb P^2$. Any $1$-parameter family of conics in $\mathbb P^2$ degenerates to the union of two lines. So the double cone breaks into the union of two irreducible surfaces, and the curve $C'$ is contained one of these and such a surface must be general because $C'$ is general.

We see that $C'$ is contained in $H' \subset W$ such that $\Phi_{|-K_X|}^*H'\in|H|$. Then we have $$-K_{H'}=-K_{W}-H'=\frac{5}{2}S'-\frac{1}{2}S'=2S'$$ and $$-K_{H'}\cdot C'-1=8-1=7$$ because of $S'\cdot C'=4$. Since the ramification is 6 points with multiplicity 2, $C'$ deforms in dimension $7-6+2=3<4$. This is a contradiction.
\end{proof}

Next we will prove Movable Bend and Break for any free $H$-conic on $X$:

\begin{theo}\label{two lines}
Let $X$ be a smooth del Pezzo threefold of Picard rank $1$ and degree $1$. Assume that $X$ is general in its moduli. 
Let $M$ be a dominant component of $\overline M_{0, 0}(X, 2)$ generically parametrizing birational stable maps. Then $C\in M$ degenerates to the union of two distinct free $H$-lines $l_{1}+l_{2}$ in $M$.
\end{theo}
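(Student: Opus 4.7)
The plan is to exploit the elliptic fibration on $X$ furnished by $|H|$. Writing $X$ as a sextic in $\bP(1,1,1,2,3)$ in Weierstrass form, the rational map $\phi := \Phi_{|H|} : X \dashrightarrow \bP^{2}$ is an elliptic fibration; under the Veronese identification $\nu : \bP^{2} \xrightarrow{\sim} V$, it agrees with the composition $\pi \circ \Phi_{|-K_{X}|}$, where $\pi : W \dashrightarrow V$ is projection from the cone vertex $v$. For a general $C \in M$, the preceding lemmas give that $C' := \Phi_{|-K_{X}|}(C)$ is a rational normal quartic spanning a $\bP^{4} \subset \bP^{6}$ not through $v$; projecting from $v$ and applying $\nu^{-1}$ produces a smooth conic $\gamma := \phi(C) \subset \bP^{2}$, and since $\Phi|_{C}$ and $\pi|_{C'}$ are each birational, so is $\phi|_{C} : C \to \gamma$. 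Thus $C$ is a section of the elliptic surface $T := \overline{\phi^{-1}(\gamma)} \to \gamma$.

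The surface $T$ equals $\Phi_{|-K_{X}|}^{-1}(W \cap H')$, with $H' \subset \bP^{6}$ the hyperplane corresponding to the equation of $\gamma$ in the $u_{i}u_{j}$ coordinates. Hence $T \in |-K_{X}|$ is a (possibly singular) K3 surface, and $C \subset T$ is a $(-2)$-curve by adjunction, in particular rigid within $T$. To deform $C$ I deform $T$: choose a general pencil $\{\gamma_{s}\}$ of conics in $\bP^{2}$ with $\gamma_{0} = \gamma$ and $\gamma_{s_{0}} = \ell_{1} \cup \ell_{2}$, two distinct lines meeting transversally at a general point $p$. Pulling back via $\phi$ yields a one-parameter family $T_{s} := \overline{\phi^{-1}(\gamma_{s})} \in |-K_{X}|$ of K3 surfaces whose special fiber $T_{s_{0}} = S_{1} \cup S_{2}$ is a Type~II Kulikov model: each $S_{i} := \overline{\phi^{-1}(\ell_{i})} \in |H|$ is a del Pezzo surface of degree $1$, and $S_{1} \cap S_{2}$ is the elliptic fiber $F_{p} = \phi^{-1}(p)$.

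Extending $C = C_{0}$ to a flat family of sections $C_{s}$ of $T_{s} \to \gamma_{s}$, the limit $C_{s_{0}}$ decomposes as $l_{1} + l_{2}$, where $l_{i} \subset S_{i}$ is the section of $S_{i} \to \ell_{i}$ arising from specialization. Using $H = \phi^{*}(\textrm{hyperplane class})$ and the projection formula, $H \cdot l_{i} = 1$, so each $l_{i}$ is an $H$-line on $X$, equivalently a $(-1)$-curve on the degree-one del Pezzo surface $S_{i}$. The sections $l_{1}$ and $l_{2}$ meet at a single point of $F_{p}$. This data assembles into a one-parameter family of stable maps in $\overline{M}_{0,0}(X, 2)$ whose generic member is a general $H$-conic in $M$ and whose special member is the nodal stable map $l_{1} \cup l_{2}$. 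For a generic choice of the pencil, $l_{1}$ and $l_{2}$ are distinct and free, giving the desired boundary element of $\overline{M}$.

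The main obstacle is rigorously constructing this degeneration as a family of stable maps. One must first resolve the base locus of $\phi$ (which arises from the Weierstrass model), then carry out a semistable reduction of the pencil $\{T_{s}\}$ to genuinely realize the Type~II Kulikov form, and finally verify that the flat limit of the sections $C_{s}$ is precisely the nodal curve $l_{1} \cup l_{2}$ (without embedded components or unexpected multiplicity). This K3-degeneration input is the key ingredient beyond the anticanonical-degree range covered by Theorem~\ref{mbb}.
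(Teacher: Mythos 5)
Your strategy has a genuine gap at its central step: the assertion that $C=C_{0}$ ``extends to a flat family of sections $C_{s}$ of $T_{s}\to\gamma_{s}$'' over a \emph{general} pencil of plane conics degenerating to $\ell_{1}\cup\ell_{2}$. A dimension count shows this family cannot exist: a dominant component $M\subset\overline M_{0,0}(X,2)$ has dimension $-K_{X}\cdot C=4$, while plane conics form a $\bP^{5}$, so the locus $Z\subset\bP^{5}$ of conics of the form $\phi(C)$ with $C$ an $H$-conic has dimension at most $4$. Hence for a general member $\gamma_{s}$ of your pencil (which is general through $\gamma_{0}$, so not contained in $Z$) the surface $T_{s}=\overline{\phi^{-1}(\gamma_{s})}$ contains no $H$-conic at all, and there is no section $C_{s}$ to take a limit of. Your own observation that $C$ is a rigid $(-2)$-curve on the K3 member $T_{0}$ is exactly the warning sign: when $T_{0}$ deforms inside the $5$-dimensional pulled-back system, the class of $C_{0}$ need not remain effective, and generically it does not. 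To salvage the idea you would have to work with a curve in $\overline{Z}$ itself (whose geometry you have not described), show that $\overline{Z}$ meets the discriminant of reducible conics at a suitably general $\ell_{1}\cup\ell_{2}$, and then control the limit stable map — which need not be a union of two section-lines: it could a priori be a bisection of one $S_{i}$ mapping $2:1$ to $\ell_{i}$, a degree-$2$ cover of an $H$-line, or an $H$-line plus a rational (nodal) fiber component of $\phi$, all of which have the right $H$-degree. You flag the Kulikov/semistable-reduction and limit issues as ``the main obstacle,'' but the more basic existence of the degenerating family is already unproven, and that is where the actual content of the theorem lies.

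For comparison, the paper avoids deforming the K3 members pulled back from conics altogether. It fixes incidence conditions on the curve side (conics through a general point $p$ meeting a general free line $l$ form a $1$-dimensional family $N$, using Proposition~\ref{prop:veryfree}), forms the incidence variety of pairs $(C,S)\in N\times|-K_{X}|$ with $C\subset S$ — a surface, since a general $C'$ spans a $\bP^{4}$ so the hyperplanes through it form a pencil — and intersects its image in $|-K_{X}|=\bP^{6}$ with the $4$-dimensional locus $\cH$ of reducible members $H_{1}+H_{2}$, $H_{i}\in|H|$. Either the limiting curve is already a union of two (automatically distinct and free, by generality) $H$-lines, or it is an integral curve lying on a general smooth degree-one del Pezzo surface $H_{1}\in|H|$; in the latter case an intersection-theoretic computation using the involution $\iota$ gives $C_{0}^{2}=2$, $p_{a}(C_{0})=1$, so $C_{0}\sim-\beta^{*}K_{\widetilde H_{1}}$ for a blow-down to a degree-two del Pezzo surface, and this linear system contains a union of two $(-1)$-curves, i.e.\ two free $H$-lines. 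If you want to pursue your elliptic-fibration picture, you would essentially have to reprove this kind of statement on the surfaces $S_{i}\in|H|$ anyway.
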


\begin{proof}
Let $p\in X$ be a general point and $l$ be a general free $H$-line on $X$. Since a general $C$ is very free by Proposition~\ref{prop:veryfree}, the locus $\{C\in M\mid p\in C, l\cap C\ne\phi\}$ of $M$ is $1$ dimensional. Pick a component $N$ of this locus. We set $$\cU=\overline{\{(C, S)\in N\times|-K_{X}| \, \mid \, \text{$C'$ spans $\bP^{4}$, and  $C\subset S$}\}}\subset N\times|-K_{X}|.$$ Every fiber of the projection $p_{1}:\cU\ra N$ is a line in $|-K_{X}|$ and the image $p_{2}(\cU)$ of the other projection $p_{2}:\cU\ra|-K_{X}| = \bP^{6}$ is a surface. The locus $\cH=\{H_{1}+H_{2}\in|-K_{X}|\mid H_{i}\in|H|\}$ has dimension 4 so by looking at these dimensions, we see that we have $p_{2}(\cU)\cap\cH\ne\phi$. Then there exists a curve $C_{0}$ such that $p_{1}^{-1}(C_{0})\ni H_{1}+H_{2}$. 

If $C_{0}$ is not integral, then it is the union of two $H$-lines so $C_{0}=l_{1}+l_{2}$ and we may assume that they satisfy that $p\in l_{1}, l\cap l_{2}\ne\phi$ and $l_{1}\cap l_{2}\ne\phi$. Note that there are only finitely many such $l_{1}$ and $l_{2}$ satisfying these conditions. Indeed, there are only finitely many $l_1$ containing $p$. Any locus parametrizing $l_2$ such that $l\cap l_2 \neq \emptyset$ is $1$-dimensional, and among them there are only finitely many $l_2$ meeting with $l_1$. Since $l_1$ contains a general point it is free. Then since $l$ and $l_1$ are general in its moduli, we conclude that $l_2$ is also general in its moduli so that it is free. Thus we conclude that $l_1 + l_2$ are distinct free $H$-lines. Thus our assertion follows. 

If $C_{0}$ is integral, we may assume it is contained in $H_{1}$. Then there are only finitely many $C_{0}$ such that $p\in C_{0}, l\cap C_{0} \neq \emptyset$ and $C_{0}\subset H_{1}$. Indeed, any locus parametrizing $C_0$ containing $p$ is $2$-dimensional. In this $2$-dimensional locus, $C_0$'s meeting with $l$ are parametrized by $1$-dimensional family. A general member in this $1$-dimensional family is not contained in $H_1$. Thus we conclude that there are only finitely many $C_0$ such that  $p\in C_{0}, l\cap C_{0} \neq \emptyset$ and $C_{0}\subset H_{1}$. Then there are only finitely many $H_{1}$ containing $C_{0}$ so that $H_{1}$ is general in $|H|$ and smooth. Thus it is a smooth del Pezzo surface of degree $1$. The image $H'_{1}=\Phi_{|-K_{X}|}(H_{1})$ is the quadric cone in $\bP^{6}$, we have $-K_{H'_{1}}=2S'|_{H_1'}$. Then we have the equivalence $C'_{0}\sim2S'|_{H'_{1}}$ on $H'_{1}$ since $S'|_{H'_{1}}\cdot C'_{0}=4$. Thus we must have
$$C_{0}+\iota(C_{0})\sim(\Phi_{|-K_{X}|})^{*}C'_{0}\sim4H=-4K_{H_{1}}.$$

Computing the intersection with $C_{0}$, we have $$C_{0}^{2}+\iota(C_{0})\cdot C_{0}=4H \cdot C_{0}=8.$$ Moreover note that $2\iota(C_{0}) \cdot C_{0}=C'_{0}\cdot 3S=12$. So we conclude that $C_{0}^{2}=2$. Let $p_{a}(C_{0})$ be the arithmetic genus of $C_{0}$, then the adjunction formula $$2p_{a}(C_{0})-2=K_{H_{1}}\cdot C_{0}+C_{0}^{2}$$ implies that $p_{a}(C_{0})=1$. 
This means that there exists a blow down $\beta:H_{1}\ra\widetilde H_{1}$ between del Pezzo surfaces of degree 1 and degree 2 such that $C_{0}\sim-\beta^{*}K_{\widetilde H_{1}}$. (See, e.g., \cite[The proof of Lemma 3.4]{BLRT21} for this claim.) Then $|-K_{\widetilde{H}_1}|$ contains the union of two $(-1)$-curves $l_1 + l_2$. Since $H_{1}$ is general, the $H$-lines $l_{1},\,l_{2}$ are general and must be free.
\end{proof}

Next we will prove that the locus parametrizing the union of distinct free $H$-lines is irreducible.
One difficulty here is that the evaluation map $M_{0, 1}(X, 1) \to X$ does not admit an irreducible general fiber. To overcome this issue we will prepare the following two lemmas:
\begin{lemm}
\label{lemm:twopoints}
Assume that $X$ is general in its moduli.
Let $M_{0, 1}(X, 1)=\mathcal R_1^{(1)}$. 
Let $N \subset \mathcal R_1^{(1)} \times_X \mathcal R_1^{(1)}$ be the closure of the locus parametrizing the union of two free $H$-lines $l_1 + l_2$ with a marked intersection point such that $l_1$ and $l_2$ meet at exactly two points. Then $N$ is irreducible.
\end{lemm}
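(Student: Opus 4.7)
The plan is to analyze $N$ via its forgetful projection
\[
\pi : N \to \mathcal{R}_1\times\mathcal{R}_1, \qquad (l_1,p,l_2,p)\mapsto(l_1,l_2),
\]
whose image $\mathcal{I}_2$ is the closure of the locus of pairs of distinct free $H$-lines meeting at exactly two points. The map $\pi$ is generically $2$-to-$1$, the two sheets corresponding to the two choices of marked intersection point from $l_1\cap l_2=\{p,q\}$. It therefore suffices to prove that $\mathcal{I}_2$ is irreducible and that the cover $\pi$ is connected.

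For the irreducibility of $\mathcal{I}_2$, I would exhibit an irreducible parameter space dominating it. A natural candidate is the moduli $\mathcal{P}$ of triples $(S, l_1, l_2)$ where $S \in |-K_X|$ is a smooth anticanonical (K3) section containing both lines and $l_1, l_2$ appear on $S$ as $(-2)$-curves with $l_1 \cdot l_2 = 2$. A Riemann--Roch computation on the nodal genus-$1$ curve $l_1 \cup l_2$ shows that the subsystem $\{S \in |-K_X| : S \supset l_1 \cup l_2\}$ has positive dimension, so $\mathcal{P}$ projects onto $\mathcal{I}_2$ with positive-dimensional projective-space fibers. I would then show irreducibility of $\mathcal{P}$ by analyzing its projection to $|-K_X|$, whose image lies in the Noether--Lefschetz locus of K3 surfaces whose Picard lattice contains the rank-$2$ sublattice generated by $l_1$ and $l_2$. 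The irreducibility of this Noether--Lefschetz locus, together with a monodromy argument based on deformations of the anticanonical K3 sections (for which Kulikov models provide a natural framework), should give the desired irreducibility. Alternatively, one may parametrize using the pencil $|H|$ of del Pezzo surfaces of degree $1$ together with the dP$2$-blow-down and bitangent analysis implicit in the proof of Theorem~\ref{two lines}.

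For the connectivity of the cover $\pi$, I would specialize within $\mathcal{I}_2$ to a tangent configuration where the two intersection points of $l_1 \cap l_2$ coalesce to a single point of multiplicity two. Over such a specialization the cover $\pi$ ramifies, which forces the two generic sheets to lie in the same irreducible component. Together with the irreducibility of $\mathcal{I}_2$, this establishes the irreducibility of $N$.

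The main obstacle is the irreducibility of the parameter space $\mathcal{P}$, specifically the monodromy analysis of $(-2)$-curve configurations on anticanonical K3 sections as the section varies; the expected tools are Noether--Lefschetz theory for anticanonical K3 sections of $X$ and monodromy control via Kulikov degenerations, or else the explicit dP$1$--dP$2$ blow-down parametrization from the conic-degeneration argument in Theorem~\ref{two lines}.
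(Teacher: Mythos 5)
Your overall skeleton (view $N$ as a degree-$2$ cover of the locus $N'\subset\mathcal R_1\times\mathcal R_1$ of pairs of free lines meeting twice, prove the base is irreducible, then force connectivity of the cover by a ramified specialization) is exactly the paper's, but the step that carries all the weight -- irreducibility of the base -- is left as a hope, and your primary route to it has a genuine gap. You propose parametrizing by anticanonical K3 sections $S\in|-K_X|$ and invoking irreducibility of a Noether--Lefschetz locus plus monodromy via Kulikov models. Noether--Lefschetz-type loci of K3s containing a prescribed configuration of $(-2)$-classes are in general not irreducible; even if the lattice-level locus were irreducible, you would still need the monodromy of the specific $6$-dimensional family of K3 sections of the \emph{fixed} general $X$ to act transitively on the actual pairs of lines realizing those classes, and no such control is offered. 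The paper avoids all of this by working with the $2$-dimensional system $|H|$ of degree-$1$ del Pezzo surfaces: two free lines meeting twice lie on a unique smooth member $H'\in|H|$, one has $l_1+l_2\sim-\beta^*K_{\widetilde H'}$ for a blow-down $\beta:H'\to\widetilde H'$ to a degree-$2$ del Pezzo, and the known $E_8$ Weyl-group monodromy on smooth members of $|H|$ yields irreducibility of $N'$. Your ``alternative'' sentence points at this argument but does not carry it out (and note $|H|$ is a net, not a pencil; the pencil $\ell\subset|H|$ enters only later, in the connectivity step).

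The connectivity step as you state it also has two gaps. First, you must show that tangential configurations (the two intersection points coalescing) actually occur in the closure on a general $X$: the paper gets this by identifying the marked intersection point with a tangency point of a bitangent line of the branch quartic of $\widetilde H'\to\mathbb P^2$ and observing that quartics with a hyperflex form a divisor, hence are met by a general pencil $\ell\subset|H|$ by generality of $X$ and $\ell$. Second, and more importantly, ramification of $\pi$ alone does not force the two sheets into one component: if $N$ had two components, each mapping birationally to $N'$, they could simply intersect at the point lying over the tangential configuration, which is consistent with a set-theoretically one-point fiber. The paper rules this out by a deformation-theoretic smoothness statement for $N_\ell$ at the hyperflex point (a smooth point lies on a unique component); without such a smoothness or unibranch input your inference ``ramification forces the two generic sheets to lie in the same irreducible component'' does not follow.
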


\begin{proof}
Let $l_1 + l_2$ be the union of general two free $H$-lines meeting each other at exactly two points. Then there exists a unique $H' \in |H|$ such that $H'$ contains both $l_1$ and $l_2$. Since $H$-lines are general, $H'$ is also general proving that $H'$ is a smooth del Pezzo surface of degree $1$. Then one can find a blow down $\beta : H' \to \widetilde{H}'$ to a degree $2$ del Pezzo surface such that
\[
l_1 + l_2 \sim -\beta^*K_{\wt H'}.
\]
Since the monodromy on smooth members of $|H|$ is the maximum Weyl group of $\bold E_8$ type, we conclude that the locus $N'$ of $\mathcal R_1 \times \mathcal R_1$ parametrizing the union of general free $H$-lines meeting each other at two points is irreducible. Then a natural map $N \to N'$ is a degree $2$ covering. This implies that $N$ has at most two components.

Now $N'$ admits a finite cover $N' \to |H|$. We consider a general pencil $\ell \subset |H|$ and the base change $N'_\ell \to \ell$. Note that $N'_\ell$ is irreducible by Lefschetz theorem of the monodromy. Then a point in $N'_\ell$ corresponds to $H' \in |H|$ and a birational morphism $\beta : H' \to \wt H'$ to a del Pezzo surface of degree $2$ with a pair of $(-1)$-curves $E_1, E_2$ on $\wt H'$ such that $E_1 + E_2 \sim -K_{\wt H'}$. The anticanonical sytem $-K_{\wt H'}$ defines a degree $2$ covering $\wt H' \to \mathbb P^2$ ramified along a quartic curve $B_{\wt H'}$ and the image of $E_1 + E_2$ is a bitangent line to $B_{\wt H'}$. For the quartic curve $B_{\wt H'}$, having an inflection point of order $4$ is codimension $1$ condition, so we may assume that there exists $\wt H'\in N'_\ell$ such that $\wt H'$ is smooth and $B_{H'}$ admits an inflection point of order $4$ by generality of $X$ and $\ell$. This means that $N_\ell \to N'_\ell$ is not \'etale. On the other hand, deformation theory tells us that $N_\ell$ is smooth at $(H', \beta, E_1+ E_2, p)$ where $p$ is one of points in $E_1\cap E_2$. Indeed, this follows from the fact that $X$ and $\ell$ are general.
Thus we conclude that $N_\ell$ is irreducible, proving our claim.
\end{proof}

\begin{lemm}
\label{lemm:threepoints}
Let $K\subset \mathcal R_1^{(1)} \times_X \mathcal R_1^{(1)}$ be the closure of the locus parametrizing the union of two free $H$-lines $l_1 + l_2$ with a marked intersection point such that $l_1$ and $l_2$ meet at exactly three points. Then $K$ is irreducible.
\end{lemm}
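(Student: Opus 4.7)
The plan is to follow the strategy of Lemma~\ref{lemm:twopoints}, the main new complication being that the relevant finite cover has degree $3$ rather than $2$.

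First I would show that $l_1+l_2$ lies in a unique hyperplane section $H'\in |H|$: for distinct $H',H''\in |H|$, the intersection $H'\cap H''$ is a curve of $H$-degree $H^3 = 1$, which cannot accommodate the $H$-degree-$2$ curve $l_1+l_2$. By the generality of $X$ one may further assume that $H'$ is a smooth del Pezzo surface of degree $1$, on which $l_1,l_2$ are $(-1)$-curves meeting with intersection number $3$. Letting $K' \subset \cR_1 \times \cR_1$ be the image of $K$ under forgetting the marked intersection point, and using that the monodromy on smooth members of $|H|$ is $W(\mathbf{E}_8)$ (which acts transitively on ordered pairs of $(-1)$-curves with a fixed intersection number on a smooth degree-$1$ del Pezzo), one concludes that $K'$ is irreducible. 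The forgetful morphism $K \to K'$ is then a finite cover of degree $3$ indexed by the three intersection points of $l_1 \cap l_2$, so it suffices to prove that this cover has transitive monodromy.

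Following Lemma~\ref{lemm:twopoints}, I would restrict to a general pencil $\ell \subset |H|$, so that $K'_\ell$ is irreducible by Lefschetz-type reasoning, and then exhibit branch points of the induced cover $K_\ell \to K'_\ell$ whose local monodromies generate a transitive subgroup of $S_3$. The most natural codimension-$1$ degeneration occurs when two of the three intersection points of $l_1\cap l_2$ coalesce into a tangential meeting; the local monodromy around such a branch point is a transposition in $S_3$. Such a degeneration can be realized explicitly on a specific smooth degree-$1$ del Pezzo surface $H'=\mathrm{Bl}_8\bP^2$, e.g.\ by taking $l_1=E_1$ and $l_2$ to be the strict transform of a plane sextic with a triple point at $P_1$ whose tangent cone has two coincident directions. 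Deformation theory (as in the previous lemma) shows that $K_\ell$ is smooth at this branch point, so the transposition is genuinely realized in the monodromy.

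The principal obstacle is that a single transposition does not generate a transitive subgroup of $S_3$; one needs either two distinct transpositions or a $3$-cycle. I would argue for the former by constructing two distinct branch-point configurations on two different members of the pencil that realize transpositions swapping different pairs of intersection points. Alternatively, one can exhibit the codimension-$2$ osculation degeneration where all three points of $l_1\cap l_2$ collapse to a single point (tangent cone $(\alpha x+\beta y)^3$ for the sextic above), which gives a $3$-cycle in the monodromy; however, this would require a $2$-dimensional slice of $|H|$ or a specially chosen pencil meeting the codimension-$2$ stratum. In either case, the deformation-theoretic smoothness of $K_\ell$ at the branch points, combined with the transitivity of the resulting monodromy, implies the irreducibility of $K_\ell$ and hence of $K$.
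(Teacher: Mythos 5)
Your route to the irreducibility of $K'$ (the space of unordered/ordered pairs without the marked point) is genuinely different from the paper's. You argue via the $W(\mathrm{E}_8)$-monodromy acting on ordered pairs of $(-1)$-curves of intersection number $3$ on smooth members of $|H|$, as in Lemma~\ref{lemm:twopoints}; the paper instead observes that for a free $H$-line $l$ the \emph{only} $H$-line meeting $l$ in three points is its image $\iota(l)$ under the involution of the double cover $\Phi_{|-K_X|}$ (on a degree-one del Pezzo surface the unique $(-1)$-class $E'$ with $E\cdot E'=3$ is the Bertini image $-2K-E$), so that $K'\cong \mathcal R_1$ and irreducibility is immediate from Theorem~\ref{var of lines}. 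Your alternative can be made to work (transitivity on such pairs reduces to transitivity on single $(-1)$-classes, and you should also record the \emph{existence}, not only the uniqueness, of the member $H'\in|H|$ containing $l_1+l_2$, and that the generic such pair lies on a smooth member), but it is heavier than needed and it misses the structural fact $l_2=\iota(l_1)$, which also identifies the three marked points with $l\cap R$, $R$ the ramification divisor of $\Phi_{|-K_X|}$ in $|3H|$ -- a natural handle on the triple cover.

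The genuine gap is the one you flag yourself: transitivity of the monodromy of the degree-$3$ cover $K\to K'$ is never established. Exhibiting one simple branch point (two of the three intersection points colliding) together with smoothness of $K_\ell$ there only rules out the trivial (unramified) cover; it does not exclude a splitting of $K_\ell$ into a section plus an irreducible double cover, which is compatible with arbitrarily many transpositions in the monodromy. Neither of your proposed remedies is carried out: for ``two distinct transpositions'' you offer no mechanism to certify that two branch points induce transpositions moving \emph{different} pairs of sheets -- sheet labellings are not globally defined and all transpositions are conjugate, so this is precisely the global statement to be proved rather than something visible from two local models on chosen surfaces; for the $3$-cycle route you concede that a general pencil misses the total-collision locus (codimension $2$ in $K'$) and you construct neither the special pencil or two-dimensional slice nor the local analysis showing the vanishing-cycle monodromy there is in fact a $3$-cycle. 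So, as written, the proposal stops short at the decisive step; to complete it you must either rule out a rational ``distinguished intersection point'' on the generic pair (no section of $K\to K'$), or produce a degeneration whose local monodromy is provably a $3$-cycle, for instance by exploiting the identification of the fibre with $l\cap R$ over $K'\cong\mathcal R_1$.
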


\begin{proof}
Note that for a free $H$-line $l$, its involution $\iota(l)$ is the only $H$-line meeting with $l$ at three points. Indeed, let $l'$ be a line meeting with $l$ at three points. Let $C_1$ and $C_2$ be the images of $l$ and $l'$ via $\Phi_{|-K_X|} : X \to W$ respectively, then $C_i$ is a conic. Since $W$ is the intersection of quadrics in $\mathbb P^6$, we conclude that for any plane $P$ in $\mathbb P^6$, $P\cap W$ can contain at most one conic. Thus when $C_1$ and $C_2$ are distinct, two planes spanned by $C_1$ and $C_2$ are different as well. As a result, $C_1$ can meet $C_2$ with at most two intersection points which contradicts with our assumption that $l$ and $l'$ meet at three points. Hence we conclude that $C_1 = C_2$ and $l' = \iota(l)$. Thus if we denote the closure of the locus of $\mathcal R_1 \times \mathcal R_1$ parametrizing a pair of free $H$-lines meeting at three points by $K'$, then $K'$ is isomorphic to $\mathcal R_1$.
Then a natural map $K \to K'$ is a degree $3$ covering.
The remaining of the proof is similar to a proof of Lemma~\ref{lemm:twopoints}.
\end{proof}

Using the above lemmas, one can deduce the following proposition:

\begin{prop}
Let $X$ be a smooth del Pezzo threefold of Picard rank $1$ and degree $1$. Assume that $X$ is general in its moduli. 
Then let $R \subset \mathcal R_1^{(1)}\times_X \mathcal R_1^{(1)}$ be the union of main components generically parametrizing a gluing of two distinct free $H$-lines.
Then $R$ is irreducible.
\end{prop}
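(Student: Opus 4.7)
My plan is to reduce $R$ to its open stratum $R^\circ\subset R$ — parametrizing triples $(l_1,l_2,p)$ with $l_1\cap l_2=\{p\}$ (a transverse intersection at the single marked point) — and then use Lemmas~\ref{lemm:twopoints} and~\ref{lemm:threepoints} as ``bridges'' to link together any potential components.

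\textit{Step 1: Reduction.} Each main component of $R$ is a component of the fiber product $\mathcal R_1^{(1)}\times_X\mathcal R_1^{(1)}$ of the expected dimension $\dim\mathcal R_1^{(1)}+\dim\mathcal R_1^{(1)}-\dim X=3$. From the proofs of Lemmas~\ref{lemm:twopoints} and~\ref{lemm:threepoints} one reads off $\dim N=\dim K=2$, since $N'$ is a discrete cover of $|H|\cong\bP^2$ and $K'\cong\mathcal R_1$ has dimension $2$. Hence $R\setminus R^\circ\subset N\cup K$ has strictly smaller dimension than $R$, giving $R=\overline{R^\circ}$. Furthermore, standard deformation theory shows that $\mathcal R_1^{(1)}\times_X\mathcal R_1^{(1)}$ is smooth at a transverse intersection of two free $H$-lines; hence $R^\circ$ is smooth, and its irreducibility reduces to connectedness.

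\textit{Step 2: Bridging via the lemmas.} Suppose $R^\circ$ decomposes as $U_1\sqcup\cdots\sqcup U_k$ with $k\geq 2$. Each closure $\overline{U_i}\subset R$ is $3$-dimensional with codimension-$1$ boundary contained in $N\cup K$. From $N=\bigcup_i (N\cap\overline{U_i})$ and the irreducibility of $N$ (Lemma~\ref{lemm:twopoints}), we get $N\subset\overline{U_{i_0}}$ for a unique index $i_0$. For any other $i$ with $\overline{U_i}\cap N\neq\emptyset$, the $\bZ/2$-symmetry on $N$ exchanging the two intersection points of $(l_1,l_2)\in N'$, together with the irreducibility of $N$, identifies the two ``marked-point smoothing branches'' of $R^\circ$ near $N$ into a single component of $R^\circ$; this forces $U_i=U_{i_0}$. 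An analogous argument using $K$ and its $\bZ/3$-symmetry (Lemma~\ref{lemm:threepoints}) handles any component whose closure meets $K$ but not $N$. Finally, every $U_i$ does meet $N\cup K$ in its closure: by Theorem~\ref{two lines} every main component of $\overline M_{0,0}(X,2)$ degenerates to a pair of free $H$-lines, and starting from a general point of $R^\circ$ and varying $l_1$ in a $1$-parameter subfamily of $\mathcal R_1$ while keeping $l_2$ fixed, a generic specialization acquires an additional intersection and thus lands in $N$. Hence $k=1$.

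\textit{Main obstacle.} The delicate point is making the ``bridging'' precise: justifying that the irreducibility of $N$ forces both analytic branches of $R$ meeting at a generic point of $N$ — corresponding to the two choices of which intersection point is marked — to lie in the same connected component of $R^\circ$. This requires a local analytic analysis of the fiber product $\mathcal R_1^{(1)}\times_X\mathcal R_1^{(1)}$ near $N$, showing that the global monodromy on the marked points (which is transitive by irreducibility of Lemma~\ref{lemm:twopoints}) is compatible with the local two-branch structure of $R$. The parallel analysis along $K$, where three branches meet and are cyclically permuted by the monodromy furnished by Lemma~\ref{lemm:threepoints}, is more involved but follows the same principle.
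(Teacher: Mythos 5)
Your Step 1 and the use of Lemmas~\ref{lemm:twopoints} and \ref{lemm:threepoints} to place $N$ (resp.\ $K$) inside a single component of $R$ are essentially sound; in fact, at a pair of distinct free $H$-lines both evaluation maps $\mathcal R_1^{(1)}\to X$ are smooth, so the fiber product is smooth at the generic points of $N$ and $K$, there is only one local branch there, and the ``local two-branch structure'' you flag as the main obstacle is not actually an issue. The genuine gap is the last sentence of Step 2: the claim that every component $U_i$ meets $N\cup K$ in its closure because, varying $l_1$ with $l_2$ fixed, ``a generic specialization acquires an additional intersection.'' Nothing forces this. The lines incident to a fixed general line $l$ form a curve in the surface $\mathcal R_1$, and a priori one of its components could consist entirely of lines meeting $l$ at exactly one point; the corresponding component of $R$ would then never degenerate into $N$ or $K$, and your argument gives no way to exclude this. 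Excluding it is precisely the heart of the paper's proof: one shows $L\cdot p_*(D_l)>0$ for every irreducible curve $L\subset\mathcal R_1$ (using that $X$ has Picard rank $1$, so the surface swept out by the lines in $L$ meets any other line), and then the Hodge index theorem on $\mathcal R_1$ shows that no component $D_i$ of the incidence curve can have $p_*D_i$ disjoint from the pushforward of the remaining components --- which is exactly what would happen if every line in $D_i$ met $l$ only once. Your appeal to Theorem~\ref{two lines} does not substitute for this: it produces a degeneration to two lines somewhere in a component of $\overline M_{0,0}(X,2)$, not inside the particular component $U_i$ of the fiber product you are trying to connect to $N$.

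There is a second gap even if the above were granted: your bridging shows at most one component contains $N$ and at most one contains $K$, but it never connects these two groups, so the configuration $R=R_N\cup R_K$ with $N\subset R_N$ and $K\subset R_K$ remains possible (your $\bZ/2$ and $\bZ/3$ monodromy arguments act within $N$ and within $K$ separately). The paper rules this out by a second application of the same intersection-theoretic mechanism: writing $D_N$ and $D_K$ for the parts of the incidence curve over a general $l$ lying on $R_N$ and $R_K$, one checks $p_*D_N$ and $p_*D_K$ would be disjoint curves on $\mathcal R_1$ each of positive self-intersection, contradicting the Hodge index theorem. Without this step, or some other argument linking the component through $N$ to the component through $K$, irreducibility does not follow.
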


\begin{proof}
Since $\mathcal R_1$ is smooth by Theorem~\ref{var of lines}, $\mathcal R_1^{(1)}$ is also smooth. Then $$\mathrm{ev}:\mathcal R_1^{(1)}\ra X$$ is a generically finite dominant cover and let $B\subset X$ be the branch divisor on $X$. Let $l\in \mathcal R_1$ be a general free $H$-line so that $l$ meets with $B$ transversally. Then $\mathrm{ev}^{-1}(l)$ is smooth and $1$-dimensional, and contains $p^{-1}(l)$ as a component where $p:\mathcal R_1^{(1)}\ra \mathcal R_1$ is the family map.

Let $D_l$ be the union of components of $\mathrm{ev}^{-1}(l)$ other than $p^{-1}(l)$. Since $D_l$ is smooth, $D_l$ is the disjoint union of smooth irreducible curves as $D_l=D_1+\cdots+D_r$. Then $D_l$ is isomorphic to the fiber at $l$ for a morphism $$R\subset \mathcal R_1^{(1)}\times_X \mathcal R_1^{(1)}\ra \mathcal R_1^{(1)}\ra \mathcal R_1$$.

For an irreducible curve $L\subset \mathcal R_1$, we show that $L\cdot p_*(D_l) >0$. Take an another line $l'$. Since $p_*(D_l)$ is algebraically equivalent to $p_*(D_{l'})$, we only have to show that $L\cdot p_*(D_{l'})>0$. The curve $L$ gives us a 1-parameter family of $H$-lines, thus $H$-lines parametrized by $L$ sweep out a surface $T$. Since $X$ has Picard rank $1$, the intersection $T\cap l'$ is not empty and so $L\cap p_*(D_{l'})\neq \phi$. 

A general $H$-line $l'$ meeting with $l$ meets with it at one point. There are finitely many $H$-lines meeting with $l$ twice and $\iota(l)$ is the only $H$-line meeting with $l$ three times.
Assume that there is $i$ such that any $H$-line parametrized by $D_i$ meets with $l$ at one point.
Then $p_*D_i$ is disjoint from $\sum_{j \neq i} p_*D_j$.
Thus we conclude that 
$$0 < p_*D_i\cdot p_*D_l=(p_*D_i)^2$$
as well as
$$0 < \left(\sum_{j \neq i}p_*D_j \right)\cdot p_*D_l = \left(\sum_{j \neq i}p_*D_j\right)^2$$
These imply that 
$$\left(p_*D_i\right)^2>0,\, p_*D_i\cdot \left(\sum_{j \neq i}p_*D_j\right)=0,\, \left(\sum_{j \neq i}p_*D_j\right)^{2}>0,$$
but this contradicts with Hodge index theorem.

Let $N$ and $K$ be the irreducible loci defined in Lemma~\ref{lemm:twopoints} and \ref{lemm:threepoints}.
Since the union of free $H$-lines is a smooth point of $\mathcal R_1^{(1)}\times_X \mathcal R_1^{(1)}$, we conclude that there are at most two components of $R$, one containing $N$ and another containing $K$. Assume that there are exactly two components, denoted by $R_N$ and $R_K$ each containing $N$ and $K$ respectively. Let $D_N$ be the union of $D_i$'s contained in $R_N$ and $D_K$ be the union of $D_i$'s contained $R_K$. Then $p_*D_N$ and $p_*D_K$ are disjoint. Thus we conclude that 
\[
(p_*D_N)^2 > 0, \, p_*D_N\cdot p_*D_K = 0,\, (p_*D_K)^2 >0.
\]
This contradicts with Hodge index theorem again.
Thus we conclude that $R$ is irreducible.
\end{proof}

Finally we prove the irreducibility of the space of $H$-conics:

\begin{theo}\label{d=2}
Let $X$ be a smooth Fano threefold such that $\Pic(X)=\bZ H$, $-K_X=2H$ and $H^3=1$. Assume that $X$ is general in its moduli. Let $M\subset\overline M_{0, 0}(X, 2)$ be a dominant component generically parametrizing birational stable maps. Then $M$ is unique.
\end{theo}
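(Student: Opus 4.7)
The plan is to combine Theorem~\ref{two lines} with the irreducibility of $R$ established in the preceding proposition, together with smoothness of $\overline M_{0, 0}(X, 2)$ along the locus of nodal stable maps formed by two distinct free $H$-lines.

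First I would recall that Theorem~\ref{two lines} guarantees that any dominant component $M \subset \overline M_{0, 0}(X, 2)$ generically parametrizing birational stable maps contains a boundary stable map of the form $f: l_1 \cup l_2 \to X$, where $l_1, l_2$ are distinct free $H$-lines meeting at a point. Such boundary stable maps are precisely the images of $R$ under the natural forgetful morphism $\varphi: R \to \overline M_{0, 0}(X, 2)$ that drops the marked intersection point. Since $R$ is irreducible by the preceding proposition, the image $\varphi(R)$ is an irreducible subset of the boundary of $\overline M_{0, 0}(X, 2)$ that meets every dominant component $M$ of birational type.

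Next I would verify that every such stable map $f: l_1 \cup l_2 \to X$ is a smooth point of $\overline M_{0, 0}(X, 2)$. The key input is that each $l_i$ is free, so $f^{*}T_X|_{l_i}$ is globally generated; a Mayer--Vietoris argument at the unique node then gives $H^{1}(l_1 \cup l_2, f^{*}T_X) = 0$, which implies unobstructedness of $f$ as a stable map and agreement of local dimension with the expected dimension $-K_X\cdot(l_1+l_2)+\dim X - 3 = 4$. Consequently $f$ lies in a unique component of $\overline M_{0, 0}(X, 2)$.

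Finally, suppose $M_1, M_2 \subset \overline M_{0, 0}(X, 2)$ are two dominant components generically parametrizing birational stable maps. By the first step both meet the irreducible locus $\varphi(R)$, and by the second step $\varphi(R)$ lies in the smooth locus of $\overline M_{0, 0}(X, 2)$, so there is a unique component of $\overline M_{0, 0}(X, 2)$ containing $\varphi(R)$. This forces $M_1 = M_2$. The main obstacle I expect is the unobstructedness statement at these boundary points, which needs deformation theory of stable maps with nodal source; it is the critical bridge that turns the shared irreducible boundary locus $\varphi(R)$ into an identification of components.
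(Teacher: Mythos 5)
Your proposal is correct and follows essentially the same route as the paper: invoke Theorem~\ref{two lines} to place a union of two distinct free $H$-lines on each such component $M$, use smoothness of $\overline M_{0,0}(X,2)$ at such a point (unobstructedness from freeness of the two lines) to see that $M$ must contain the irreducible image of $R$, and conclude uniqueness. The only slight imprecision is the claim that all of $\varphi(R)$ lies in the smooth locus -- the closure may contain degenerate maps (non-free lines or multiple covers of a line) -- but the argument only needs smoothness at the specific points produced by Theorem~\ref{two lines} and at a general point of $\varphi(R)$, which is exactly what the paper uses.
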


\begin{proof}
Let $R\subset \mathcal R_1^{(1)}\times_X \mathcal R_1^{(1)}$ be the main component parametrizing the union of two distinct $H$-lines and $R' \subset \overline{M}_{0,0}(X, 2)$ be the image of $R$ via the gluing map. From Lemma \ref{two lines}, there exist two distinct free $H$-lines $l_1$ and $l_2$ such that $l_1+l_2\in M$. Then $l_1+l_2$ is a smooth point of $\overline M_{0, 0}(X, 2)$. Thus $M$ contains $R'$ and it must be unique.
\end{proof}

\section{Rational curves of higher degree}
\label{sec:higherdegree}

In this section we study the space of rational curves of higher degree and prove Theorem~\ref{theo:main}. First let us recall the following theorem:

\begin{theo}[{\cite[Theorem 7.6]{LT19}}]
Let $X$ be a smooth Fano threefold such that $\Pic(X)=\bZ H$, $-K_X=2H$ and $H^3=1$.   Assume that $W$ is a component of $\overline M_{0, 0}(X, d)$ and let $W_p$ denote the sublocus parametrizing curves through a point $p\in X$. There is a finite set of points $S\subset X$ such that:
\begin{itemize}
\item $W_p$ has the expected dimension $2d-2$ for points $p$ not in $S$;
\item $W_p$ has dimension at most $2d-1$ for points $p\in S$.
\end{itemize}
Furthermore for $p\not\in S$ the general curve parametrized by $W_p$ is irreducible.
\end{theo}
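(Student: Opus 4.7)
The plan is to combine the dominant-component dimension result \cite[Theorem 4.6]{LT19} with Theorem \ref{2para}, the very-freeness of Proposition \ref{prop:veryfree}, and a tangent space computation at a general free curve.

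First, I would verify that the higher-$a$-invariant locus $V$ of Theorem \ref{proper closed} is empty on $X$. By Theorem \ref{theo:higher_a-inv}, any surface $Y \subset X$ with $a(Y, -K_X|_Y) > 1$ must be swept out by $-K_X$-lines; but since $-K_X = 2H$ and every irreducible curve satisfies $-K_X \cdot C \geq 2$, no such line exists on $X$. So $V = \emptyset$, and \cite[Theorem 4.6]{LT19} gives $\dim W = 2d$ and $\dim W^{(1)} = 2d + 1$, with evaluation $\mathrm{ev} : W^{(1)} \to X$ dominant of generic fiber dimension $2d - 2$. This yields the first bullet on an open subset of $X$.

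Second, I would show the uniform bound $\dim W_p \leq 2d - 1$ for every $p \in X$ via a tangent space calculation. If $\dim W_p = 2d = \dim W$ at some $p$, then $W_p = W$ set-theoretically, so every curve in $W$ contains $p$. At a general free member $[f] \in W$ (a smooth point of tangent dimension $2d$), the evaluation $\mathrm{ev}_{x_0} : H^0(f^*T_X) \to T_{f(x_0)} X$ at any $x_0 \in f^{-1}(p)$ is surjective, so the deformations of $f$ still passing through $p$ have tangent codimension $2$ in $T_{[f]} W$, contradicting $W_p = W$ near $[f]$.

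Third, I would show $S := \{p \in X : \dim W_p \geq 2d - 1\}$ is finite. Upper semicontinuity makes $S$ closed, and $\dim \mathrm{ev}^{-1}(S) \geq \dim S + (2d - 1)$ combined with $\dim W^{(1)} = 2d + 1$ immediately rules out $\dim S = 2$ (else $\mathrm{ev}^{-1}(S) = W^{(1)}$ violates the dominance of $\mathrm{ev}$). If $\dim S = 1$, then $\dim \mathrm{ev}^{-1}(S) \geq 2d$, and since a general $f(\bP^1) \neq S$ the projection $\mathrm{ev}^{-1}(S) \to W$ is generically finite and therefore surjective, forcing every $[f] \in W$ to meet $S$. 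To contradict this I would use very-freeness (Proposition \ref{prop:veryfree}): the evaluation $W^{(2)} \to X \times X$ is dominant of generic fiber dimension $2d - 4$, so for two general points $p_0, q_0 \in X \setminus S$ a parameter count shows that the locus of curves through $p_0, q_0$ additionally meeting $S$ has dimension strictly less than the locus of all curves through $p_0, q_0$; hence some curve in $W$ avoids $S$, the desired contradiction. Making this codimension inequality rigorous, especially in the borderline case $d = 2$ where curves through two general points form a zero-dimensional family, is the main obstacle of the proof.

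Finally, for irreducibility of a general $[f] \in W_p$ when $p \notin S$, I would invoke Theorem \ref{2para}: if generic fibers of $\mathrm{ev}$ were reducible, then $W$ would parametrize either $-K_X$-conics (forced to be $H$-lines, so $d = 1$, already handled by Theorem \ref{var of lines}) or curves contracted by a del Pezzo fibration (impossible since $\rho(X) = 1$). Hence for $d \geq 2$ generic fibers of $\mathrm{ev}$ are irreducible, and since $W \subset \Rat(X)$ has an open locus of birational stable maps with irreducible domains meeting $W_p$ for any $p$ outside the finite set $S$, the general curve parametrized by $W_p$ has irreducible domain.
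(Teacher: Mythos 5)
Your attempt does assemble reasonable ingredients (emptiness of the higher $a$-invariant locus, \cite[Theorem 4.6]{LT19} for the generic fiber dimension, semicontinuity for closedness of the bad set), but the heart of the theorem---the uniform bound $\dim W_p\le 2d-1$ at \emph{every} point together with the finiteness of $S$---is exactly what you do not prove. Your Step 3 rules out $\dim S=2$, but for $\dim S=1$ you only sketch a very-freeness parameter count and yourself concede it is ``the main obstacle.'' As it stands that count does not go through: points of $S$ are by definition the points where the fiber dimension of $\mathrm{ev}$ jumps, so the dimension of the locus of curves through $p_0,q_0$ that also meet $S$ cannot be estimated using generic fiber dimensions; the count collapses for $d=2$; and Proposition~\ref{prop:veryfree} is simply unavailable for components that do not generically parametrize birational stable maps---notably $\mathcal N_d$, whose members are multiple covers of $H$-lines and are never very free---even though the theorem is asserted for \emph{every} component $W$. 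The paper itself does not reprove the statement: it cites \cite[Theorem 7.6]{LT19}, whose proof runs an induction on $d$ in the style of \cite{CS09}, breaking an overly large family of curves through a fixed point into lower-degree curves through that point and feeding in the inductive pointwise bounds; some such induction (or, for $\mathcal N_d$, concrete input on the surface of lines) is the missing idea in your argument.

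Two further gaps. First, Steps 1--2 only apply to components whose general member has irreducible domain (so that \cite[Theorem 4.6]{LT19} and the freeness/tangent-space argument make sense); a priori $\overline M_{0,0}(X,d)$ could have components generically parametrizing stable maps with reducible domains, and excluding these is essentially Corollary~\ref{non-empty}, which the paper \emph{deduces from} this theorem---you cannot invoke it without circularity, and ruling such components out again requires dimension bounds on boundary strata, i.e.\ pointwise bounds for lower degree. Second, the final step misapplies Theorem~\ref{2para}: that theorem concerns irreducibility of the general \emph{fiber} of $\mathrm{ev}$ (i.e.\ of $W_p$ as a variety), not irreducibility of the curves parametrized by $W_p$, and its first alternative (images are $-K_X$-conics) applies to $\mathcal N_d$ for every $d$, since $H$-lines are anticanonical conics, so it does not force $d=1$. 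Finally, your closing claim that the open locus of irreducible-domain maps meets $W_p$ for \emph{every} $p\notin S$ (not merely for generic $p$) is exactly a pointwise bound on $\mathrm{ev}$-fibers of the boundary of $W$, which once more is the inductive content you have not supplied.
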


\begin{proof}
See \cite[Theorem 7.6]{LT19} for a proof which uses the arguments in \cite{CS09}.
\end{proof}

As a corollary we have the following statement.

\begin{coro}\label{non-empty}
Let $X$ be a smooth Fano threefold such that $\Pic(X)=\bZ H$, $-K_X=2H$ and $H^3=1$. For any $d\geq 1$, if $\overline M_{0, 0}(X, d)$ is non-empty then every component generically parametrizes free curves and has the expected dimension.
\end{coro}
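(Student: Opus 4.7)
The plan is to take an arbitrary component $W\subset \overline M_{0,0}(X,d)$ and deduce both properties from the previous theorem. The expected dimension of $\overline M_{0,0}(X,d)$ equals $\dim X - 3 - K_X\cdot d = 2d$, and standard deformation theory for stable maps into a smooth variety gives this as a universal lower bound on each component; so $\dim W \geq 2d$ comes for free.

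For the matching upper bound I would exploit the finite exceptional set $S\subset X$ provided by the previous theorem. Since $S$ is finite, its complement is dense in $X$, so $W_p$ is nonempty of dimension $2d-2$ for a general $p\in X$. Letting $\cC\to W$ denote the universal family, this already forces the evaluation $\mathrm{ev}:\cC\to X$ to be dominant. For $p\notin S$ the theorem further guarantees that a general member of $W_p$ is irreducible, so it meets $p$ in only finitely many points and the projection $\mathrm{ev}^{-1}(p)\to W_p$ is generically finite. Combining these,
\[
\dim W + 1 \;=\; \dim \cC \;=\; \dim\mathrm{ev}^{-1}(p) + \dim X \;=\; (2d-2) + 3 \;=\; 2d+1,
\]
which pins $\dim W$ to the expected value $2d$.

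To promote dominance of $\mathrm{ev}$ to freeness of the general member, I would invoke the splitting of $f^{*}T_{X}$ on $\bP^{1}$. At a general $[f]\in W$, dominance of $\mathrm{ev}$ forces the fiber map $H^{0}(\bP^{1},f^{*}T_{X})\to (f^{*}T_{X})_{t}$ to be surjective at a general $t\in \bP^{1}$. On $\bP^{1}$ this is equivalent to $f^{*}T_{X}$ being globally generated, which in turn forces every summand $\cO(a_{i})$ of $f^{*}T_{X}$ to satisfy $a_{i}\geq 0$, i.e., $f$ is free.

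The most delicate ingredient is the incidence-dimension bookkeeping in the middle step: the irreducibility of the general member of $W_p$ furnished by the theorem is essential to guarantee that $\mathrm{ev}^{-1}(p)\to W_p$ is generically finite, since otherwise reducible-domain or multiple-cover members could in principle inflate this fiber and spoil the equality. Once the dimension is pinned down, the passage from dominance to freeness is a formal consequence of the splitting of vector bundles on $\bP^{1}$.
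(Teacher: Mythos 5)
Your strategy (lower bound from deformation theory, upper bound from the pointwise estimate, freeness from dominance in characteristic $0$) is the intended one, but there are two genuine soft spots. First, you extract from the quoted theorem that $W_p$ is \emph{nonempty} of dimension $2d-2$ for a general $p\in X$, and you build everything (dominance of $\mathrm{ev}$, the generic fibre dimension computation) on that. The content of that theorem, proved by the Coskun--Starr technique, is a dimension \emph{bound} on the loci $W_p$; nonemptiness of $W_p$ for general $p$ is exactly dominance of the evaluation map, i.e.\ part of what the corollary asserts, and a hypothetical component whose curves sweep out only a surface is precisely the danger your argument never excludes. The repair is short and uses the same theorem: let $I\subset W\times X$ be the incidence variety of pairs $(w,p)$ with $p$ on the image curve of $w$. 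A component $I_0$ of $I$ dominating $W$ has $\dim I_0=\dim W+1\geq 2d+1$, and the fibre of $I_0\to X$ over $p$ sits inside $W_p$. If the image $T$ of $I_0$ in $X$ had dimension $1$ or $2$, a general $p\in T$ would avoid the finite set $S$ and the fibre over $p$ would have dimension at least $\dim I_0-2\geq 2d-1$, contradicting $\dim W_p\leq 2d-2$; hence $\mathrm{ev}$ is dominant and $\dim W-2=\dim I_0-3\leq\dim W_p\leq 2d-2$, i.e.\ $\dim W\leq 2d$. Note that working with $I$ (image curves) rather than the universal domain curve $\cC$ also patches your generic-finiteness step: components of $\mathrm{ev}^{-1}(p)$ lying over the sublocus of stable maps with a component contracted to $p$ have one-dimensional fibres over $W_p$, and the theorem's statement that the \emph{general} curve in $W_p$ is irreducible does not by itself bound those pieces; with $I$ the fibre over $p$ simply is $W_p$ and no such discussion is needed.

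Second, ``generically parametrizes free curves'' requires the general member of $W$ to have irreducible domain, which you never address: a priori $W$ could lie in the boundary of $\overline M_{0,0}(X,d)$, and your splitting argument for $f^*T_X$ presupposes the domain is $\bP^1$. This again follows from the theorem once dominance is known: $W_p\neq\emptyset$ for general $p$, and if every member of $W$ had reducible domain then so would every member of $W_p$, contradicting the final assertion of the theorem; since the boundary is closed, the general member of $W$ is a birational or multiple-cover map from $\bP^1$, and then your evaluation-of-sections argument (equivalently Koll\'ar's theorem that a dominating family of rational curves in characteristic $0$ has free general member) gives freeness, while the equality $\dim W=2d$ follows by combining the two bounds. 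With these two repairs your proof is the standard derivation the paper has in mind.
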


Finally we prove Theorem~\ref{theo:main} using inductive arguments on $d$:

\begin{proof}[Proof of Theorem~\ref{theo:main}]
The following proof is taken from \cite[Theorem 7,9]{LT19}. We include it for completeness of the paper. Let $\cN_d$ be the component generically parametrizing degree $d$ covers from $\mathbb P^1$ to lines. It is clear that $\cN_d$ is irreducible. By counting dimensions, we see that multiple covers of curves cannot form a component of $\overline M_{0, 0}(X)$ unless the curves are $H$-lines.

Let $M$ be a dominant component of $\overline M_{0, 0}(X, d)$ generically parametrizing birational stable maps. We claim that for a general point $x\in X$ the fiber $\mathrm{ev}^{-1}(x)\cap M$ is irreducible. Indeed, if not, then it follows from Theorem~\ref{2para} that a general stable map parametrized by $M$ is a multiple cover of a $H$-line, a contradiction with our construction. 

We will prove our theorem by induction on $d$. When $d=2$, by Theorem \ref{d=2} the dominant component $M$ generically parametrizing birational stable maps is unique. When $d>2$, we assume the assertion for $2\leq d'<d$. By gluing free curves of lower degree, there exists a component $M\subset \overline M_{0, 0}(X, d)$ such that $M\ne\cN_d$. Then by Corollary \ref{non-empty} any general member $(C, f)$ of $M$ is a birational stable map from an irreducible curve. On the other hand, by Movable Bend and Break (Theorem~\ref{mbb}), $M$ contains a chain of free curves of $H$-degree at most 2 and, by Theorem \ref{d=2}, each component of the parameter space of $H$-conics contains a chain of free $H$-lines. We see that $M$ contains a chain $(C, f)$ of free $H$-lines of length $d$ from \cite[Lemma 5.9]{LT19}. Then $(C, f)$ is a smooth point in $\overline M_{0, 0}(X)$. Furthermore if the image of $(C, f)$ is irreducible, then $(C, f)$ is contained in $\cN_d$. This implies $M=\cN_d$, a contradiction. 

Thus we conclude that the image of $(C, f)$ is reducible. So we see that $(C, f)$ is a point on the image $\Delta_{1, d-1}$ of the main component of the fiber product $\cR^{(1)}_1\times_X\cR^{(1)}_{d-1}$ which is unique by the fact that $d-1 \geq 2$ and a general fiber of $\cR_{d-1}^{(1)} \to X$ is irreducible. We conclude that $M$ contains $\Delta_{1, d-1}$ since $(C, f)$ is smooth. This means that $M$ is unique. Thus our assertion follows.
\end{proof}

\section{Applications}
\label{sec:applications}

In this section we discuss some consequences of our results for del Pezzo threefolds of Picard rank $1$ and degree $1$. In particular, we discuss applications to Geometric Manin's conjecture and enumerativity of certain Gromov-Witten invariants for these threefolds.

\subsection{Geometric Manin's conjecture}

Geometric Manin's conjecture and its refinement have been formulated in \cite[Section 6]{LT19} and \cite[Section 7]{BLRT}. We recall its formulation from \cite{BLRT}: let $X$ be a smooth weak Fano variety with $L = -K_X$. Let $f : Y \to X$ be a generically finite morphism from a smooth projective variety such that one of the following is true:
\begin{enumerate}
\item $f: Y \to X$ satisfies $a(Y, f^*L) > a(X, L) = 1$;
\item $f: Y \to X$ is dominant and it satisfies $\kappa(a(Y, f^*L)f^*L + K_Y) >0$ and 
\[
(a(X, L), b(X, L)) \leq (a(Y, f^*L), b(Y, f^*L)),
\]
in the lexicographic order, or;
\item $f : Y \to X$ is dominant and satisfies $\kappa(a(Y, f^*L)f^*L + K_Y) =0$. Moreover we have 
\[
(a(X, L), b(X, L)) \leq (a(Y, f^*L), b(Y, f^*L)),
\]
in the lexicographic order and $f$ is face contracting.
\end{enumerate}
(See \cite[Definition 3.4]{BLRT} for the definition of face contracting.)
Let $M$ be a component of $\Rat(X)$. If there is a component $N$ of $\Rat(Y)$ such that $f : Y \to X$ induces a dominant rational map $N \dashrightarrow M$, then we say $M$ is an {\it accumulating component}. If $M$ is not an accumulating component, then we say $M$ is a {\it Manin component}. 
Roughly speaking Geometric Manin's conjecture predicts that the number of Manin components for each numerical class of higher anticanonical degree is constant. For smooth Fano threefolds this is stated in \cite[Conjecture 7.9]{BLRT}. Here is a consequence of our result:

\begin{theo}
Let $X$ be a general del Pezzo threefold of Picard rank $1$ and degree $1$.
For $d \geq 2$, $\overline{M}_{0,0}(X, d)$ contains a unique Manin component.
\end{theo}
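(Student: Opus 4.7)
By Theorem~\ref{theo:main}, for $d \geq 2$ we have the decomposition $\overline{M}_{0,0}(X, d) = \cR_d \cup \cN_d$ into exactly two irreducible components, so the claim reduces to showing that $\cN_d$ is an accumulating component while $\cR_d$ is a Manin component.

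The first step of the plan is to exhibit $\cN_d$ as accumulating via the universal family of $H$-lines. Let $\pi : \cL \to \cR_1$ denote the universal $\bP^1$-bundle over the surface of $H$-lines (smooth by Theorem~\ref{var of lines}), and let $u : \cL \to X$ be the evaluation morphism; then $\cL$ is a smooth projective $3$-fold and $u$ is dominant and generically finite. The sublocus of $\Rat(\cL)$ parametrizing $d$-fold covers of the $\bP^1$-fibers of $\pi$ forms a component $N$ for which $u$ induces a dominant rational map $N \dashrightarrow \cN_d$. Writing $\cL = \bP(E)$ for a rank-two bundle $E$ over $\cR_1$ and $u^{*}(-K_X) = 2\xi + \pi^{*} D$ for some divisor $D$ on $\cR_1$, the relative Euler sequence yields
\[
u^{*}(-K_X) + K_{\cL} = \pi^{*}\bigl(D + K_{\cR_1} + c_1(E)\bigr).
\]
Using the general-type behavior of $\cR_1$ (a consequence of the Abel--Jacobi morphism being generically finite to its image in the intermediate Jacobian, Theorem~\ref{var of lines}), I expect this class to be pseudo-effective with positive Iitaka dimension, giving $a(\cL, u^{*}(-K_X)) = 1 = a(X, -K_X)$ and $\kappa\bigl(u^{*}(-K_X) + K_{\cL}\bigr) > 0$. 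Combined with $b(\cL, u^{*}(-K_X)) \geq \rho(\cR_1) + 1 \geq 2 > 1 = b(X, -K_X)$, this places $u$ in case (2) of the accumulating definition, confirming that $\cN_d$ is accumulating.

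The second step is to prove $\cR_d$ is Manin by contradiction. Suppose that some $f : Y \to X$ together with a component $N' \subset \Rat(Y)$ dominating $\cR_d$ witnesses $\cR_d$ as accumulating. A general $C \in \cR_d$ is very free by Proposition~\ref{prop:veryfree}, and its image therefore sweeps out $X$, ruling out case (1) (which would require $f(Y) \subsetneq X$). For cases (2) and (3), $f$ is dominant and generically finite, and I invoke the classification carried out in \cite{BLRT}: Theorem~\ref{theo:higher_a-inv} shows that higher-$a$ surfaces on $X$ are swept by $H$-lines, and Theorem~\ref{2para} shows that dominant components of $\Rat(X)$ with non-irreducible general evaluation fibers parametrize $-K_X$-conics (which here coincide with $H$-lines, since $-K_X = 2H$) or curves contracted by a del Pezzo fibration. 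Since $\rho(X) = 1$ excludes any del Pezzo fibration, the only dominant accumulating cover to consider is the universal $H$-line family. But then the curves parametrized by $N'$ would project to multi-covers of $H$-lines in $X$, contradicting the fact that a general $C \in \cR_d$ is birational onto its image and has $H \cdot C = d \geq 2$.

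The main obstacle will be the Fujita invariant computation in the first step, specifically establishing pseudo-effectivity and positive Iitaka dimension of $D + K_{\cR_1} + c_1(E)$ on $\cR_1$. I anticipate this reduces to the bigness of $K_{\cR_1}$ (from Tikhomirov's description of the variety of lines) combined with explicit intersection calculations on the $\bP^1$-bundle $\cL$. Rigorously extracting the classification of dominant accumulating covers in the second step from \cite{BLRT} also requires care but is essentially standard within the framework developed there.
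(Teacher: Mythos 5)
Your overall architecture is the same as the paper's: exhibit $\cN_d$ as accumulating via the universal family of $H$-lines $u:\cL=\cR_1^{(1)}\to X$ placed in case (2) of the definition, and conclude from Theorem~\ref{theo:main} plus the classification of dominant $a$-covers that $\cR_d$ is Manin. The difference is that the paper imports the two delicate points wholesale from the classification results of \cite{BLRT} (Lemma 5.2, Theorems 5.3 and 5.4, together with Theorem~\ref{theo:higher_a-inv}), whereas you attempt to re-derive them, and this is where genuine gaps remain. The central one is the verification that $u$ really is a case (2) cover: your Euler-sequence computation correctly reduces this to showing that $A=D+K_{\cR_1}+c_1(E)$ has $\kappa(A)>0$ on $\cR_1$, but you only ``expect'' pseudo-effectivity and positive Iitaka dimension, and the proposed reduction to bigness of $K_{\cR_1}$ does not work as stated because the twist $D+c_1(E)$ is not controlled (the useful identity is $u^*(-K_X)+K_{\cL}=R_u$, the ramification divisor of $u$, which gives effectivity of $A$ but not yet $\kappa(A)\geq 1$). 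This is precisely the content the paper cites from \cite{BLRT}. Relatedly, your inequality $b(\cL,u^*(-K_X))\geq\rho(\cR_1)+1$ is unjustified; fortunately it is also unnecessary, since with $a(\cL,u^*(-K_X))=1$ the lexicographic condition only requires $b(\cL,u^*(-K_X))\geq b(X,-K_X)=1$, which holds automatically because the adjoint class lies on the boundary of the pseudo-effective cone.

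In the second step, the exclusion of case (1) via very freeness of a general member of $\cR_d$ (Proposition~\ref{prop:veryfree}) is fine. But for cases (2) and (3) you need two things you do not actually supply: first, a classification of all dominant $a$-covers of $X$, which is not the statement of Theorem~\ref{2para} (that theorem is about components of $\Rat(X)$ whose one-pointed evaluation fibers are non-irreducible, not about arbitrary covers $f:Y\to X$); and second, once reduced to the $H$-line family, an argument that no component of $\Rat(\cR_1^{(1)})$ dominates $\cR_d$. A priori a component of $\Rat(\cR_1^{(1)})$ could parametrize rational curves mapping birationally onto general members of $\cR_d$ (i.e., rational ``sections'' of $u$ over such curves), and your appeal to birationality of general members of $\cR_d$ only rules out multiple covers of lines, which presupposes the conclusion. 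Both points are exactly what the cited results of \cite{BLRT} (the $a$-cover classification and its consequences for accumulating components, with curves dominating $\Rat(X)$ from such a cover being contracted by the Iitaka fibration of the adjoint class, hence mapping to multiple covers of lines) are used for in the paper's proof; without them, or a monodromy-type argument on $u^{-1}(C)\to C$ for general $C\in\cR_d$, your second step is incomplete.
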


\begin{proof}
This follows from the classification of the $a$-covers: \cite[Lemma 5.2, Theorem 5.3, and Theorem 5.4]{BLRT} combined with Theorem~\ref{theo:higher_a-inv} and Theorem~\ref{theo:main}. Indeed $\mathcal N_d$ is an accumulating component as $s : \mathcal R_1^{(1)} \to X$ satisfies the property (2) above. To see this, first note that we have $\kappa(2s^*H + K_{\mathcal R_1^{(1)}}) > 0$ because there is no adjoint rigid surface $S \subset X$ with $a(S, 2H) = a(X, 2H) =1$ by Lemma~\ref{iitaka one}. Then  we have 
\[
(a(X, 2H), b(X, 2H)) = (1, 1) = (a(\mathcal R_1^{(1)}, 2s^*H), b(\mathcal R_1^{(1)}, 2s^*H)).
\] 
We conclude that component $\mathcal R_d$ is a unique Manin component.
\end{proof}

This unique Manin component is a good component in the sense of \cite[Definition 7.2]{BLRT}. In particular the above theorem confirms \cite[Conjecture 7.9]{BLRT} for general del Pezzo threefolds of Picard rank $1$ and degree $1$. See \cite[Section 7]{BLRT} for more details.

\subsection{Enumerativity of Gromov-Witten invariants}

Let $X$ be a general del Pezzo threefold of Picard rank $1$ and degree $1$.
Let $C$ be a general rational curve of $H$-degree $d \geq 2$. Then $C$ is very free by Propostion~\ref{prop:veryfree}. Thus it follows from \cite[3.14 Theorem]{Kollar} that $C$ is smooth. On the other hand \cite[Theorem 1.4]{S} implies that the normal bundle of $C$ is given by
\[
\mathcal O_{\mathbb P^1}(d-1) \oplus \mathcal O_{\mathbb P^1}(d-1).
\]
This implies that there exist finitely many rational curves of $H$-degree $d$ passing through $d$ general points. Moreover we have the following proposition:

\begin{prop}
Any stable map $f:C \to X$ of $H$-degree $d \geq 2$ whose image contains $d$ general points is a birational stable map from $\mathbb P^1$ to a rational curve of $H$-degree $d$ on $X$.
\end{prop}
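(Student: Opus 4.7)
The plan is to combine Theorem~\ref{theo:main} with dimension counts on pointed moduli. By Theorem~\ref{theo:main}, $(C,f)\in\cR_d\cup\cN_d$. First I would exclude $\cN_d$: any element of $\cN_d$ has image a single $H$-line, but Theorem~\ref{var of lines} implies that the $H$-lines through a general point of $X$ form a finite set, so no $H$-line contains two general points. Hence $(C,f)\in\cR_d$.

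Next I would show $C=\bP^1$ by a dimension count on the evaluation map $\mathrm{ev}:\overline M_{0,d}(X,dH)\to X^d$, where source and target both have dimension $3d$. A boundary stratum whose dual tree has $k\geq 2$ curve vertices (with degrees $d_v$, marks $n_v$ and half-edges $e_v$ satisfying $\sum d_v=d$, $\sum n_v=d$, $\sum e_v=2(k-1)$) is the image of a gluing map from $\prod_v\overline M_{0,n_v+e_v}(X,d_vH)$. Corollary~\ref{non-empty} forces each factor to attain its expected dimension $2d_v+n_v+e_v$; subtracting the $3(k-1)$ node-matching conditions yields stratum dimension
$$\sum_v\bigl(2d_v+n_v+e_v\bigr)-3(k-1)=3d+2k-2-3(k-1)=3d-k+1\leq 3d-1.$$
Therefore its image in $X^d$ has codimension at least one, and no reducible stable map passes through a general $d$-tuple $(p_1,\ldots,p_d)$.

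Finally I would show $f$ is birational onto its image. If not, $f$ factors as $g\circ h$ with $h:\bP^1\to\bP^1$ of degree $e\geq 2$ and $g:\bP^1\to C'$ birational onto a rational curve $C'$ of $H$-degree $d/e$. Having already ruled out $\cN_d$, we may assume $d/e\geq 2$. But then $C'$ must contain all $d$ general points $p_j$, whereas the family of rational curves of $H$-degree $d/e$ has dimension $2d/e$ and requiring passage through $d$ general points imposes codimension $2d>2d/e$, which is impossible. Hence $f$ is birational, and its image is a rational curve of $H$-degree $d$. The principal obstacle is the boundary-stratum dimension count in the second step, which relies crucially on Corollary~\ref{non-empty} to guarantee that each factor $\overline M_{0,n_v+e_v}(X,d_vH)$ achieves its expected dimension, so that gluing cannot produce excess-dimensional strata dominating $X^d$.
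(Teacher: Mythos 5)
Your Steps 1 and 3 are fine (for Step 1 one only needs that two general points of $X$ do not lie on a common $H$-line, which already follows from the family of lines being $2$-dimensional; for Step 3 the points are general, so the generic expected-dimension statement of Corollary~\ref{non-empty} really does suffice). The gap is precisely at the place you single out as the crux. Corollary~\ref{non-empty} does not justify ``subtracting the $3(k-1)$ node-matching conditions'': imposing the diagonal conditions at the nodes on $\prod_v\overline M_{0,n_v+e_v}(X,d_vH)$ bounds the dimension of the glued locus from \emph{below} by $\sum_v(2d_v+n_v+e_v)-3(k-1)$, not from above. If the node-evaluation maps have fibers of excess dimension over some special locus of $X$, the stratum can be strictly larger than your count, and since your margin is only $k-1$, even a single fiber jumping by $2$ over one point (which expected dimension and generic freeness of the factors do not rule out) would already destroy the inequality for $k=2$. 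So the key inequality $\dim(\mathrm{stratum})\leq 3d-k+1$ is asserted but not proved from the input you invoke.

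What rescues the count is the pointed statement quoted in the paper immediately before Corollary~\ref{non-empty}, namely \cite[Theorem 7.6]{LT19}: for every component, the sublocus of curves through a fixed point $p$ has the expected dimension $2e-2$ for $p$ outside a finite set $S$, and dimension at most $2e-1$ for $p\in S$. With this one can prune the tree one leaf at a time: if a node maps to $X\setminus S$, each side of the node loses $3$ in fiber dimension while the node itself contributes $3$ parameters, giving the expected drop of $3$ per edge; if the node maps into $S$, each side loses at least $2$ but the node is confined to finitely many points, so the total drop is at least $4$ and one even gains. (Vertices with $d_v=0$ are harmless since there the evaluation is a projection.) So the conclusion of your Step 2 is correct, and your argument is the same kind of dimension count the paper delegates to \cite[Corollary 8.4 (1)]{LT21}, but as written it silently replaces the needed control of evaluation fibers --- nontrivial content of \cite[Theorem 7.6]{LT19} --- by the weaker statement of Corollary~\ref{non-empty}, and that replacement is not valid.
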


\begin{proof}
One can argue as \cite[Corollary 8.4 (1)]{LT21}.
\end{proof}

Now we consider the pointed GW-invariant $\langle [pt]^d\rangle^{X, 2d}_{0, d}$.
The component $\mathcal N_d$ does not contribute to this GW-invariant as stable maps parametrized by $\mathcal N_d$ cannot pass through $d$ general points. Moreover since $\mathcal R_d$ generically parametrizes birational stable maps, the generic stabilizer of the moduli stack along this component is trivial. Altogether these imply:
\begin{theo}
The GW-invariant $\langle [pt]^d\rangle^{X, 2d}_{0, d}$ is enumerative, i.e., it coincides with the actual number of rational curves of $H$-degree $d$ passing through $d$ general points.
\end{theo}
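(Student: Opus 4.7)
The plan is to check, for $d$ general points $p_1,\ldots,p_d \in X$, that the fiber of the evaluation map
\[
\mathrm{ev}:\overline M_{0,d}(X,d)\longrightarrow X^d
\]
over $(p_1,\ldots,p_d)$ is a finite reduced zero-dimensional substack consisting entirely of birational stable maps from $\bP^1$ with trivial automorphism, supported in the smooth locus of $\overline M_{0,d}(X,d)$ of expected dimension $3d$. Once this is established, the virtual fundamental class agrees with the ordinary fundamental class on an open neighborhood of this fiber, so the integral defining the GW-invariant reduces to the naive geometric count.

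First I would eliminate the multiple-cover component $\cN_d$. Every stable map parametrized by $\cN_d^{(d)}$ has image contained in a single $H$-line. By Theorem~\ref{var of lines}, the Abel--Jacobi map $\AJ:\cR_1 \to \mathrm{IJ}(X)$ is generically finite, so only finitely many $H$-lines pass through a general point $p_1$; a general second point $p_2$ then lies on none of them, so $\mathrm{ev}^{-1}(p_1,\ldots,p_d)\cap\cN_d^{(d)}=\emptyset$ for $d\geq 2$.

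Next, on the component $\cR_d^{(d)}$ I invoke the proposition immediately preceding the theorem: any stable map of $H$-degree $d$ whose image passes through $d$ general points is a birational stable map $f:\bP^1\to X$. Because $f$ is birational, any $\sigma\in\mathrm{Aut}(\bP^1)$ with $f\circ\sigma=f$ must be the identity, so $\mathrm{Aut}(f)$ is trivial. The normal bundle calculation from \cite{S} recalled just before the theorem gives $N_f\cong\cO_{\bP^1}(d-1)^{\oplus 2}$, hence $H^1(\bP^1,f^*T_X)=0$; therefore the moduli stack is smooth of the expected dimension at $f$ with vanishing obstruction bundle, and the virtual fundamental class coincides with the ordinary one on a neighborhood of $f$. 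Since $N_f$ is globally generated, evaluation at the $d$ marked points is a submersion at $f$, so $\mathrm{ev}^{-1}(p_1,\ldots,p_d)\cap\cR_d^{(d)}$ is zero-dimensional and reduced.

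Assembling these pieces, each geometric point of $\mathrm{ev}^{-1}(p_1,\ldots,p_d)$ contributes weight $1/|\mathrm{Aut}(f)|=1$ to the invariant, and no contributions arise from $\cN_d^{(d)}$ or from the boundary. Hence $\langle[pt]^d\rangle^{X,2d}_{0,d}$ equals the number of such geometric points, which is the number of rational curves of $H$-degree $d$ through $d$ general points. The only potentially delicate step is verifying that no reducible stable map or multiple cover of a lower-degree curve contributes, but both possibilities are ruled out by the preceding proposition; thus the substantive work has already been done in establishing that proposition together with the component classification of Theorem~\ref{theo:main}, and the present theorem is merely an assembly of pieces at hand.
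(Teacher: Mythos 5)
Your proposal is correct and follows essentially the same route as the paper: rule out $\cN_d$ (its stable maps lie on a single $H$-line, so cannot meet $d\geq 2$ general points), invoke the preceding proposition so that every contributing map is a birational stable map from $\bP^1$ (hence trivial automorphisms and trivial generic stabilizer on $\cR_d$), and use the balanced normal bundle $\cO_{\bP^1}(d-1)^{\oplus 2}$ to get unobstructedness and a finite reduced fiber, so each point counts with weight one and the virtual count is the geometric count. The only difference is cosmetic: you make the transversality explicit, where the precise statement you need is $H^1(N_f(-p_1-\cdots-p_d))=0$ (which follows from $N_f\cong\cO(d-1)^{\oplus 2}$, not from global generation alone), together with the standard observation that the fiber over a general point of $X^d$ avoids the proper closed locus of $\cR_d^{(d)}$ where the normal bundle is not balanced -- a step the paper leaves implicit.
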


In particular the above discussion implies that $\langle [pt]^d\rangle^{X, 2d}_{0, d}$ is non-zero. It is computed in \cite{Gol07} that $\langle [pt]^d\rangle^{X, 2d}_{0, d}$ is $60$ when $d = 1$ and it is $1800$ when $d = 2$.

\bibliographystyle{alpha}
\bibliography{Fano3-folds}

\end{document}